\documentclass[11pt]{amsart}

\usepackage{epsfig,amsmath,amsfonts,latexsym,mathrsfs,amssymb,flafter}
\usepackage{color}
\usepackage{overpic}
 \usepackage{palatino}
 \usepackage{hyperref}
  \usepackage{graphics}
  \usepackage{mathtools}
  \usepackage{adjustbox}
 \usepackage[nocompress]{cite}
 \usepackage{tikz,tikz-cd}
\usetikzlibrary{calc,decorations.pathmorphing,shapes,cd}

\newcounter{sarrow}

\textwidth 6.5in \textheight 8.4in \evensidemargin 0in
\oddsidemargin 0in  
\topmargin .3in

\newtheorem{theorem}{Theorem}[section]
\newtheorem{proposition}[theorem]{Proposition}
\newtheorem{lemma}[theorem]{Lemma}
\newtheorem{corollary}[theorem]{Corollary}

\theoremstyle{definition}

\newtheorem{remark}[theorem]{Remark}

\theoremstyle{remark}

\DeclareFontFamily{U}{mathx}{\hyphenchar\font45}
\DeclareFontShape{U}{mathx}{m}{n}{
      <5> <6> <7> <8> <9> <10>
      <10.95> <12> <14.4> <17.28> <20.74> <24.88>
      mathx10
      }{}
\DeclareSymbolFont{mathx}{U}{mathx}{m}{n}
\DeclareFontSubstitution{U}{mathx}{m}{n}
\DeclareMathAccent{\widecheck}{0}{mathx}{"71}

\theoremstyle{plain}

%*********************************************************************
\begin{document}

\title{Unknotted Curves on Seifert Surfaces}

\author{Subhankar Dey}

\author{Veronica King}

\author{Colby T. Shaw}

\author{B\"{u}lent Tosun}

\author{Bruce Trace}

\address{Department of Mathematics\\ University of Alabama\\Tuscaloosa\\AL}

\email{sdey4@ua.edu}

\address{Department of Mathematics\\ University of Texas Austin\\Austin\\TX}

\email{viking0905@gmail.edu}

\address{School of Mathematics\\ Georgia Institute of Technology\\Atlanta\\GA}

\email{cshaw44@gatech.edu}

\address{Department of Mathematics\\ University of Alabama\\Tuscaloosa\\AL}

\email{btosun@ua.edu}

\address{Department of Mathematics\\ University of Alabama\\Tuscaloosa\\AL}

\email{btrace@ua.edu}

\subjclass[2010]{57K33, 57K43, 32E20}

\begin{abstract} We consider homologically essential simple closed curves on Seifert surfaces of genus one knots in $S^3$, and in particular those that are unknotted or slice in $S^3$. We completely characterize all such curves for most twist knots: they are either positive or negative braid closures; moreover, we determine exactly which of those are unknotted. A surprising consequence of our work is that the figure eight knot admits infinitely many unknotted essential curves up to isotopy on its genus one Seifert surface, and those curves are enumerated by Fibonacci numbers. On the other hand, we prove that many twist knots admit homologically essential curves that cannot be positive or negative braid closures. Indeed, among those curves, we exhibit an example of a slice but not unknotted homologically essential simple closed curve. We further investigate our study of unknotted essential curves for arbitrary Whitehead doubles of non-trivial knots, and obtain that there is a precisely one unknotted essential simple closed curve in the interior of the doubles' standard genus one Seifert surface. As a consequence of all these we obtain many new examples of 3-manifolds that bound contractible 4-manifolds.   
\end{abstract}

\maketitle

\section{Introduction}

Suppose $K \subseteq S^3$ is a genus $g$ knot with Seifert Surface $\Sigma_K.$ Let $b$ be a curve in $\Sigma_K$ which is {\it homologically essential}, that is it is not separating $\Sigma_K$, and {\it a simple closed curve}, that is it has one component and does not intersect itself.  Furthermore, we will focus on those that are {\it unknotted} or {\it slice} in $S^3$, that is each bounds a disk in $S^3$ or $B^4$. In this paper we seek to progress on the following problem:

\medskip

\noindent {\it Characterize and, if possible, list all such b's for the pair $(K, \Sigma_K)$ where $K$ is a genus one knot and $\Sigma_K$ its Seifert surface.}

\medskip

Our original motivation for studying this problem comes from the intimate connection between unknotted or slice homologically essential curves on a Seifert surface of a genus one knot and $3$-manifolds that bound contractible $4$-manifolds. We defer the detailed discussion of this connection to Section~\ref{unknottocontractible}, where we also provide some historical perspective. For now, however, we will focus on getting a hold on the stated problem above for a class of genus one knots, and as we will make clear in the next few results, this problem is already remarkably interesting and fertile on its own.

\subsection{Main Results.} A well studied class of genus one knots is so called twist knot $K=K_t$ which is described by the diagram on the left of Figure~\ref{TwistKnots}. We note that with this convention $K_{-1}$ is the right-handed trefoil $T_{2,3}$ and $K_1$ is the figure eight knot $4_1$. We will consider the genus one Seifert surface $\Sigma_K$ for $K=K_t$ as depicted on the right of Figure~\ref{TwistKnots}.

\begin{figure}[h!]
\begin{center}
 \includegraphics[width=10cm]{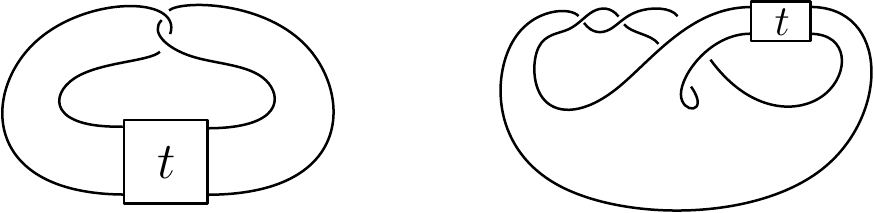}
 \caption{On the left is the twist knot $K_t$ where the box contains $t$ full right-handed twists if $t\in\mathbb{Z}_{>0}$, and $|t|$ full left-handed twists if $t\in\mathbb{Z}_{<0}$. On the right is the standard Seifert surface for $K_t$. }
  \label{TwistKnots}
\end{center}
\end{figure} 

The first main result in this paper is the following.

\begin{theorem}\label{main}
Let $t\leq 2$. Then the genus one Seifert surface $\Sigma_K$ of $K=K_t$ admits infinitely many homologically essential, unknotted curves, if and only if $t=1$, that is $K$ is the figure eight knot $4_1.$ 
\end{theorem}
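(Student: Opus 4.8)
\smallskip

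\noindent\textit{Proof proposal.} Since $\Sigma_K$ is a once-punctured torus, fix the basis $x,y$ of $H_1(\Sigma_K;\Z)\cong\Z^2$ given by the cores of the two bands of Figure~\ref{TwistKnots}; then $x\cdot y=1$, the Seifert form is $V_t=\bigl(\begin{smallmatrix}-1&1\\0&t\end{smallmatrix}\bigr)$, and $\Sigma_K$ is the plumbing of a negative Hopf band (core $x$) and an annulus with unknotted core $y$ carrying $t$ full twists. By the classification of essential simple closed curves on the once-punctured torus, each homologically essential simple closed curve on $\Sigma_K$ is, up to isotopy \emph{on} $\Sigma_K$, determined by and realizes a primitive class $\pm(p,q)$ with $\gcd(p,q)=1$; write $b_{p,q}\subset S^3$ for it. Thus the theorem asserts that $U(t):=\{(p,q):b_{p,q}\text{ is unknotted}\}$ is infinite exactly when $t=1$. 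Note $b_{1,0}=x$ and $b_{0,1}=y$ are unknots for every $t$ (cores of plumbed annuli), so $U(t)\neq\varnothing$ always; the real issue is its size.

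\emph{The case $t=1$ (sufficiency).} The figure-eight knot is fibered with fiber $\Sigma_K$, and its monodromy $\varphi$ acts on $H_1(\Sigma_K)$ by $V_1^{-1}V_1^{\top}=\bigl(\begin{smallmatrix}2&1\\1&1\end{smallmatrix}\bigr)=\bigl(\begin{smallmatrix}1&1\\1&0\end{smallmatrix}\bigr)^{2}$, which is Anosov of infinite order and satisfies $\varphi_\ast^{\,n}(1,0)=(F_{2n+1},F_{2n})$. Because $\Sigma_K$ is a fiber, dragging a curve once around the fibration is an isotopy inside $S^3\setminus K$ from $b$ to $\varphi(b)$, so $\varphi^{\,n}(b)$ is isotopic to $b$ in $S^3$ for every $n$; applying this to the unknot $b=b_{1,0}=x$ shows each $\varphi^{\,n}(x)$ is an unknotted, homologically essential simple closed curve, and since $\varphi_\ast$ has irrational eigendirections the classes $(F_{2n+1},F_{2n})$ are pairwise non-proportional, so these curves are pairwise non-isotopic on $\Sigma_K$. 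Hence $U(1)$ is infinite; in fact it contains every primitive solution of $p^{2}-pq-q^{2}=1$, i.e.\ the consecutive-Fibonacci pairs, which accounts for the Fibonacci enumeration quoted in the abstract (showing $U(1)$ is \emph{exactly} this set is a separate, and for the theorem unnecessary, refinement).

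\emph{The case $t\le 2$, $t\neq 1$ (necessity).} Here I must show $U(t)$ is finite for $t\in\{2,0,-1,-2,\dots\}$. The crux is an identification lemma: by an isotopy on the plumbed surface, carried out uniformly in the slope, describe the knot type of $b_{p,q}$ explicitly in terms of $(p,q)$ and $t$. I expect the outcome to be that for these $t$ the curve $b_{p,q}$ is, up to mirror image, the closure of a \emph{positive} braid whose strand number $s(p,q)$ and crossing number $c(p,q)$ grow without bound as $\max(|p|,|q|)\to\infty$; then Bennequin's formula gives $g(b_{p,q})=\tfrac12\bigl(c(p,q)-s(p,q)+1\bigr)\to\infty$, so $b_{p,q}$ is unknotted for only finitely many $(p,q)$ and $U(t)$ is finite. (Should $b_{p,q}$ instead fall into a two-bridge / double-twist family — the scenario to watch for at $t=2$, where the two bands twist oppositely — one reads off that $\Delta_{b_{p,q}}\neq 1$, hence $g(b_{p,q})\ge 1$, off a finite set, with the same conclusion.) I expect essentially all of the difficulty in the necessity half to sit in this lemma: producing a slope-uniform normal form for $b_{p,q}$ on $\Sigma_K$ and verifying positivity of (or a divergent genus bound for) the resulting knot, the mixed-sign case $t=2$ being the most delicate.

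\emph{Why $t=1$ is the only exception.} Among $t\le 2$ the twist knot $K_t$ is fibered only for $t=\pm 1$; for $t=-1$ the monodromy is \emph{periodic} of order $6$, so it merely recycles the finitely many unknots of the single orbit $\{(1,0),(1,1),(0,1)\}$ of $x$, while the genus bound above disposes of everything off that orbit; for every other $t\le 2$ there is no monodromy at all. Only at $t=1$ is there an ambient symmetry of infinite order available to push the unknotted core $x$ to infinitely many pairwise non-isotopic unknots --- equivalently, only at $t=1$ does the ``complexity of $b_{p,q}$ grows with the slope'' mechanism of the necessity proof fail, and it fails precisely along the Fibonacci hyperbola, where $g(b_{p,q})\equiv 0$.
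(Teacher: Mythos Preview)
Your sufficiency argument for $t=1$ is correct and genuinely different from the paper's. The paper never invokes the fibration; instead it performs explicit isotopies on $\Sigma_K$ showing that an $(m,n)$ $\infty$ curve with $m>n>m-n$ is isotopic in $S^3$ to the $(m-n,2n-m)$ $\infty$ curve (and dually for loop curves), and then observes that iterating this reduction starting from $(1,1)$ and $(2,1)$ generates the Fibonacci pairs. Your monodromy argument---$\varphi^n(x)$ is ambiently isotopic to $x$ because $\Sigma_K$ is a fiber, and $\varphi_\ast=\bigl(\begin{smallmatrix}2&1\\1&1\end{smallmatrix}\bigr)$ sends $(1,0)$ to $(F_{2n+1},F_{2n})$---yields the same infinite family with essentially no computation, and immediately explains \emph{why} Fibonacci numbers appear. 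What it does not give, and what the paper's hands-on reduction does, is the converse direction (every unknotted curve is Fibonacci); as you correctly note, that refinement is not needed for Theorem~\ref{main}.

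For the necessity direction, your outline is the paper's strategy exactly, but you have left the central lemma unproved (``I expect the outcome to be\dots''). The paper's content for $t\neq 1$ is precisely this lemma: through a long sequence of explicit isotopies (Figures~\ref{NTCase1}--\ref{NTCase4} for $t\le -1$, Figures~\ref{TTCase1}--\ref{TTCase4NN} for $t=2$) it exhibits every $(m,n)$ curve as the closure of a concrete positive or negative braid, records $(s,l)$ in Proposition~\ref{seifertdata}, and then checks via Cromwell that $g(\Sigma_c)>0$ off a finite list. Your intuition that $t=2$ is the most delicate is confirmed: there the paper must split into the subcases $n>m$, $m-2n>0$, and $m-n<n$ (the last further split into $\infty$ versus loop), and it is exactly this case analysis that fails to close for $t\ge 3$, leading to Theorems~\ref{Twist2} and~\ref{Twist3}. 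Your two-bridge/Alexander-polynomial fallback for $t=2$ is not what the paper does and would need its own justification; the paper stays entirely within the positive/negative braid framework. So your proposal is a sound plan whose sufficiency half is a clean alternative proof, but whose necessity half still owes the reader the braid normal forms that constitute the bulk of Sections~\ref{ntp} and~\ref{ptp}.
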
 

\noindent Indeed, we can be more precise and characterize all homologically essential, simple closed curves on $\Sigma_K$, from which Theorem~\ref{main} follows easily. To state this we recall an essential simple closed curve $c$ on $\Sigma_K$ can be represented (almost uniquely) by a pair of non-negative integers $(m,n)$ where $m$ is the number of times $c=(m,n)$ runs around the left band and $n$ is the number of times it runs around the right band in $\Sigma_{K}$. Moreover, since $c$ is connected, we can assume $\textrm{gcd}(m,n) = 1$. Finally, to uniquely describe $c$, we adopt the notation of {\it $\infty$ curve} and {\it loop curve} for a curve $c$, if the curve has its orientation switches one band to the other and it has the same orientation on both bands, respectively (See Figure~\ref{Setup}). 

\begin{theorem} \label{Twist} Let $K=K_t$ be a twist knot and $\Sigma_K$ its Seifert surface as in Figure~\ref{TwistKnots}. Then; 
\begin{enumerate}

\item For $K =K_{t}$, $t\leq -1$, we can characterize all homologically essential simple closed curves on $\Sigma_K$ as the closures of negative braids in Figure \ref{NTC}. In case of the right-handed trefoil  $K_{-1}=T_{2,3}$, exactly $6$ of these, see Figure \ref{unknottedcurvestref}, are unknotted in $S^3.$ For $t<-1$,  exactly $5$ of these, see Figure \ref{twistunknots}, are unknotted in $S^3.$

\item For $K=K_1=4_1$, we can characterize all homologically essential simple closed curves on $\Sigma_K$ as the closures of braids in Figure \ref{41Braids}. A curve on this surface is unknotted in $S^3$ if and only if it is (1) a trivial curve $(1,0)$ or $(0,1)$, (2) an $\infty$ curve in the form of $(F_{i+1},F_{i})$, or (3) a loop curve in the form of $(F_{i},F_{i+1})$, where $F_i$ represents the $i^{th}$ Fibonacci number, see Figure~\ref{41unknotss}.

\end{enumerate}
\end{theorem}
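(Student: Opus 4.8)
The plan is to set up a concrete normal form for homologically essential simple closed curves on $\Sigma_K$ and then reduce the unknottedness question for each family to a computation that can be iterated. First I would fix coordinates: cutting $\Sigma_K$ along the cores of the two bands turns it into a disk, and an essential simple closed curve is determined by the pair $(m,n)$ of geometric intersection numbers with the two cocores, together with the combinatorial data ($\infty$ curve versus loop curve) recording how the strands reconnect across the plumbing region. The constraint that the curve is connected and embedded forces $\gcd(m,n)=1$ and, in fact, pins down the connection pattern up to the stated ambiguity; I would carry out this bookkeeping carefully, since the rest of the argument rests on having a clean braid description. The twisting in the box of Figure~\ref{TwistKnots} contributes a fixed block of full twists on the strands running through the right band, and the plumbing contributes a single clasp-type crossing between the two groups of strands; assembling these gives exactly the braids drawn in Figures~\ref{NTC} and~\ref{41Braids}. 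For $t\le -1$ all crossings have the same (negative) sign, so these are negative braid closures, proving the first sentence of part~(1).

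For part~(1), once the curve is written as the closure of an explicit negative braid, I would compute its knot type by standard invariants. The key point is that for a negative (hence in particular non-split alternating-after-mirroring, or simply via the HOMFLY/genus bound for positive braids) braid closure, the Seifert genus is read off from the braid word, so the closure is unknotted only if the braid word is already trivial or reduces by Markov moves to the trivial one. This turns "which $(m,n)$ give the unknot" into a finite check: only small values of $m+n$ can possibly work, and one lists them. The trefoil case $K_{-1}=T_{2,3}$ and the case $t<-1$ differ only in the size of the twist block, which changes which of the small candidates actually destabilize to the unknot — hence $6$ versus $5$. I would organize this as: (a) genus/positivity obstruction kills all but finitely many $(m,n)$; (b) for the survivors, exhibit an explicit isotopy (sequence of Reidemeister/Markov moves) to the unknot or compute a nontrivial invariant (Alexander polynomial, signature, or determinant) to rule it out; (c) match the survivors with Figures~\ref{unknottedcurvestref} and~\ref{twistunknots}.

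Part~(2) is where the real content lies, and it is the step I expect to be the main obstacle. Here $t=1$, so the twist block and the clasp have opposite signs and cancellation is possible, which is exactly why infinitely many unknots appear. I would proceed by finding a destabilization/Markov reduction on the braid of Figure~\ref{41Braids} that takes the curve labeled $(m,n)$ to a curve labeled by a strictly smaller pair — concretely, I expect a move of "subtractive" Euclidean-algorithm type, sending an $\infty$ curve $(m,n)$ with $m>n$ to the $\infty$ curve $(m-n,n)$ or the loop curve $(n, m-n)$, mirroring the continued-fraction structure of the figure eight's monodromy (which is the $\mathrm{SL}_2(\Z)$ matrix with trace $3$, whose dynamics on slopes are governed by Fibonacci-type recursions). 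Iterating, every curve reduces either to a trivial curve $(1,0)$, $(0,1)$ — yielding the unknot — or to a curve that one recognizes as a nontrivial torus or twist knot, which is then obstructed by its Alexander polynomial or signature. The Fibonacci pairs $(F_{i+1},F_i)$ and $(F_i,F_{i+1})$ are precisely those that survive the reduction all the way down to a trivial curve, because $\gcd(F_{i+1},F_i)=1$ and the pair $(F_{i+1},F_i)$ is the unique "worst case" of the subtractive algorithm terminating at $(1,0)$ — this is the classical fact that consecutive Fibonacci numbers are the slowest input to the Euclidean algorithm, here reappearing as the slowest curves to simplify. The delicate part is checking that no other pair survives: for a non-Fibonacci $(m,n)$ the reduction must bottom out at something with a nontrivial classical invariant, and verifying this uniformly (rather than case by case) requires identifying the intermediate knots precisely, most likely as $(2,k)$ torus knots or their relatives, and invoking that these have nontrivial Alexander polynomial. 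I would present the reduction as a lemma ("each non-minimal curve on $\Sigma_{4_1}$ is isotopic in $S^3$ to a curve with smaller $(m,n)$, explicitly described") and then deduce the classification by induction, with the Fibonacci enumeration falling out of the base cases of the induction.
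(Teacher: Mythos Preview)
Your outline for part~(1) is essentially the paper's argument: the curves are put into negative braid form, and then Cromwell's result (Seifert's algorithm on a positive or negative braid closure gives a minimal genus surface) reduces unknottedness to an explicit genus computation which leaves only finitely many survivors.

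For part~(2), your high-level plan (find a reduction to a smaller pair, iterate, show only Fibonacci pairs survive) matches the paper, but the mechanism you describe is not the one that actually works, and the reasoning you give for why Fibonacci is special would not prove the theorem. Two concrete issues:

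\begin{itemize}
\item The reduction is not $(m,n)\mapsto(m-n,n)$. The isotopy the paper finds on the $\infty$ curve with $m>n$ sends $(m,n)$ to the $\infty$ curve $(m-n,\,2n-m)$, i.e.\ it is the action of the trace-$3$ matrix $\begin{psmallmatrix}1&-1\\-1&2\end{psmallmatrix}$ you allude to, not the standard subtractive Euclidean step. Under the naive Euclidean step $(m,n)\mapsto(m-n,n)$ \emph{every} coprime pair would terminate at $(1,0)$, so your proposed move cannot distinguish Fibonacci from non-Fibonacci pairs; the ``slowest input to the Euclidean algorithm'' heuristic is therefore not the operative phenomenon.
\item The crucial structural point you are missing is that the reduction $(m,n)\mapsto(m-n,2n-m)$ is only available as an isotopy in the range $n<m<2n$. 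When $m>2n$ the paper shows (by a further isotopy) that the curve becomes a genuine negative braid closure, and Cromwell's genus formula then gives positive genus. Likewise the two braids in Figure~\ref{41Braids} that are already positive/negative are disposed of directly by the genus formula. Fibonacci pairs are exactly those that remain in the window $n<m<2n$ under iteration of $(m,n)\mapsto(m-n,2n-m)$, since $F_{i-1}<F_i<2F_{i-1}$; a non-Fibonacci pair eventually exits the window and is killed by the genus bound, not by recognizing it as a specific torus knot or by computing an Alexander polynomial.
\end{itemize}

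So the correction is: replace your Euclidean step by the correct trace-$3$ recursion, and replace the Alexander/signature obstruction for the non-reducible cases by the observation that those cases are again positive or negative braid closures, handled uniformly by the same Cromwell genus argument you used in part~(1).
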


\begin{figure}[h] 
    \centering
    \includegraphics[scale = .85]{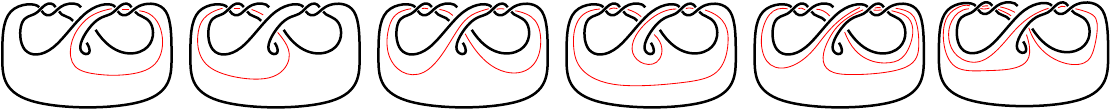}
       \caption{It can easily be shown these $6$ curves, from left to right $(0,1), (1,0), (1,1)~\infty, (1,1)~\textrm{loop}, (1,2)~\infty$ and $(2,1)~\infty$, on $\Sigma_K$ are unknotted in $S^3.$ One can easily check that the other $(1,2)$ and $(2,1)$ curves ( that is $(1,2)~\textrm{loop}$ and $(2,1)~\textrm{loop}$ curves) both yield the left-handed trefoil $T_{2,-3}$, and hence they are not unknotted in $S^3.$ }
    \label{unknottedcurvestref}
\end{figure}

\begin{figure}[h] 
    \centering
    \includegraphics[scale = .5]{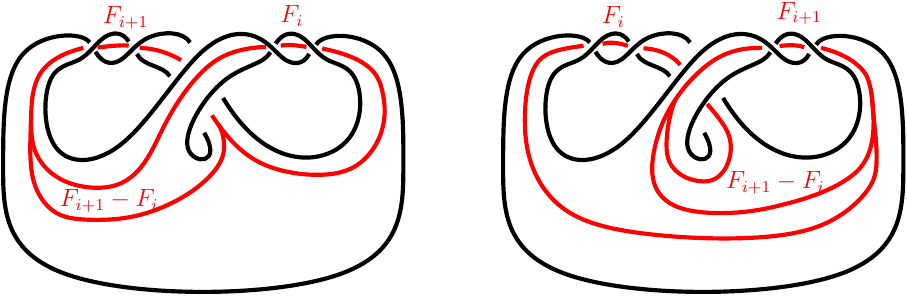}
       \caption{The two infinite families of unknotted curves for the figure eight knot in $S^3.$ The letters on parts of our curve or in certain locations stands for the number of strands that particular curve or location. For example, for the $(m,n)$ $\infty$ curve on the left we will show in Section~\ref{fep} via explicit isotopies how starting with the known unknotted $(1, 1)$ $\infty$ curve we can recursively obtain the following sequence of unknotted curves: $(1,1) \sim (3,2) \sim (8,5) \sim (21, 13) \sim (55, 34) \sim \cdots$ }
    \label{41unknotss}
\end{figure}

\noindent For twist knot $K=K_t$ with $t>1$ the situation is more complicated. Under further hypothesis on the parameters $m,n$ we can obtain results similar to those in Theorem~\ref{Twist}, and these will be enough to extend the theorem entirely to the case of $K=K_2$, so called Stevedore's knot $6_1$ (here we use the Rolfsen's knot tabulation notation). More precisely we have;

\begin{theorem}\label{Twist2} Let $K=K_t$ be a twist knot and $\Sigma_K$ its Seifert surface as in Figure~\ref{TwistKnots}. Then; 

\begin{enumerate}
\item When $t>1$ and $m<n$, we can characterize all homologically essential simple closed curves on $\Sigma_K$ as the closures of positive braids in Figure~\ref{TTC}(a)(b). Exactly $5$ of these, see Figure \ref{twistunknots}, are unknotted in $S^3.$  

\item  When $t>1$ and $m>n$.
\begin{enumerate}
\item If $m-tn>0$, then we can characterize all homologically essential simple closed curves on $\Sigma_K$ as the closures of negative braids in Figure~\ref{TTCase3NN} and ~\ref{TTCase4NN}. Exactly $5$ of these, see Figure \ref{twistunknots}, are unknotted in $S^3.$ 

\item If $m-n<n$ and the curve is $\infty$ curve, then we can characterize all homologically essential simple closed curves on $\Sigma_K$ as the closures of positive braids Figure \ref{Case3sporadic}. Exactly $5$ of these, see Figure \ref{twistunknots}, are unknotted in $S^3.$ 

\end{enumerate}

\item For $K =K_{2}=6_1$, we can characterize all homologically essential simple closed curves on $\Sigma_K$ as the closures of positive or negative braids. Exactly $5$ of these, see Figure \ref{twistunknots}, are unknotted in $S^3.$ 

\end{enumerate}
\end{theorem}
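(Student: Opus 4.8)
The plan is to run, for $t>1$, the same strategy that establishes Theorem~\ref{Twist}. Fix a homologically essential simple closed curve $c=(m,n)$ on $\Sigma_K$, with $\gcd(m,n)=1$ and its $\infty$ or loop decoration, and from the two-band picture of $\Sigma_K$ produce an explicit braid word whose closure is $c$: the two bands contribute blocks of crossings, and the twist region contributes $t$ full twists on the strands that pass through it. The three sets of hypotheses in the statement ($m<n$; $m>n$ with $m-tn>0$; $m>n$ with $m-n<n$ and $c$ an $\infty$ curve) are precisely the conditions under which all of these crossings can be arranged to have the same sign, so that after isotopy $c$ is the closure of the \emph{positive} (resp.\ \emph{negative}) braid drawn in Figures~\ref{TTC}, \ref{TTCase3NN}, \ref{TTCase4NN} and~\ref{Case3sporadic}. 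First I would carry out these isotopies regime by regime, recording in each case the number of strands $k$ and the number of crossings $\ell$ of the resulting braid as explicit affine-linear functions of $m$, $n$ and $t$.

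Once $c$ is realized as the closure of a positive (resp.\ negative) braid $\beta$ on $k$ strands with $\ell$ crossings and connected closure, the Bennequin surface of $\beta$ is a minimal-genus Seifert surface for $c$ (positive braid closures are fibered by Stallings, and Bennequin's inequality is sharp for them), so $2g(c)=\ell-k+1$. Hence $c$ is the unknot in $S^3$ if and only if $\ell=k-1$, equivalently $\beta$ destabilizes to the trivial one-strand braid. Substituting the formulas for $\ell$ and $k$ recorded above turns the assertion that $c$ is unknotted into a single linear Diophantine condition in $m$, $n$ and $t$, whose coprime solutions should be exactly the five curves of Figure~\ref{twistunknots}: the two trivial curves $(1,0)$ and $(0,1)$ together with three further small curves (among $(1,1)\,\infty$, $(1,1)\,\mathrm{loop}$, $(1,2)$ and $(2,1)$). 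To conclude this part one exhibits an explicit unknotting isotopy for each of those five curves, and checks that no solution is lost in passing between the different figures.

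For part (3), I would first observe that when $t=2$ the three regimes of parts (1) and (2) already cover almost every curve: $m<n$ is handled by (1), $m>2n$ by (2a), and $n<m<2n$ with $c$ an $\infty$ curve by (2b). What remains are a few boundary pairs (e.g.\ $m=2n$, $(2,1)$, $(3,2)$) and, more seriously, the \emph{infinite} family of loop curves with $n<m<2n$, which is not covered by (2b). The boundary pairs are disposed of by direct isotopy; the loop-curve family needs a separate braiding argument tailored to $t=2$, exhibiting these curves too as closures of positive or negative braids (here the special geometry of the Stevedore surface can be exploited), after which the genus count of the previous paragraph applies verbatim. Assembling all of this shows that every homologically essential simple closed curve on $\Sigma_{K_2}$ is a positive or negative braid closure and that exactly the same five are unknotted.

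I expect the main obstacle to be the bookkeeping in the first step: isotoping $c=(m,n)$ into a clean braid position while simultaneously controlling the sign of every crossing, and in particular pinning down the precise inequalities on $m$, $n$ and $t$ that prevent a block of crossings from cancelling — this is what forces the somewhat awkward case division. A secondary difficulty, specific to part (3), is checking that this case division together with the extra $t=2$ arguments is genuinely exhaustive, with no pair $(m,n)$ and no $\infty$/loop choice slipping through the cracks; this is finite in spirit but delicate.
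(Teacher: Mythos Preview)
Your proposal is correct and follows essentially the same approach as the paper: explicit case-by-case isotopies of $(m,n)$ curves into positive or negative braid closures in each regime, followed by a genus computation using the minimality of the Bennequin/Seifert-algorithm surface (the paper cites Cromwell rather than Stallings--Bennequin for this, but the content is the same), and a separate $t=2$ argument for the loop curves with $n<m<2n$ to complete part~(3). One small slip to fix when you carry this out: the crossing counts involve terms like $n(n-1)$ and $(m-n)(m-n-1)$, so the Diophantine condition $\ell=k-1$ is quadratic in $m,n$, not linear; this changes nothing structurally but you should not expect an affine equation.
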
   

What Theorem~\ref{Twist2} cannot cover is the case $t>2$, $m>n$ and $m-tn<0$ or when $m-n<n$ and the curve is a loop curve. Indeed in this range {\it not} every homologically essential curve is a positive or negative braid closure. For example, when $(m,n)=(5,2)$ and $t=3$ one obtains that the corresponding essential $\infty$ curve, as a smooth knot in $S^3$, is the knot $5_2$, and for $(m,n)=(7,3)$ and $t=3$, the corresponding knot is $10_{132}$ both of which are known to be not positive braid closures--coincidentally, these knots are not unknotted or slice. Moreover we can explicitly demonstrate, see below, that if one removes the assumption of ``$\infty$'' from part 2(b) in Theorem~\ref{Twist2}, then the conclusion claimed there fails for certain loop curves when $t>2$. A natural question is then whether for knot $K =K_{t}$ with $t>2$, $m>n$ and $m-tn<0$ or $m-n<n$ loop curve, there exists unknotted or slice curves on $\Sigma_K$ other than those listed in Figure \ref{twistunknots}? A follow up question will be whether there exists slice but not unknotted curves on $\Sigma_K$ for some $K=K_t$? We can answer the latter question in affirmative as follows:

\begin{theorem}\label{Twist3}
 Let $K=K_t$ be a twist knot with $t>2$ and $\Sigma_K$ its Seifert surface as in Figure~\ref{TwistKnots} and consider the loop curve $(m,n)$ with $m=3, n=2$ on $\Sigma_K$. Then this curve, as a smooth knot in $S^3$, is the pretzel knot $P(2t-5, -3, 2)$. This knot is never unknotted but it is slice (exactly) when $t=4$, in which case this pretzel knot is also known as the curious knot $8_{20}$. 
\end{theorem}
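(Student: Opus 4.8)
The plan is to proceed in two stages: first identify the smooth knot type of the $(3,2)$ loop curve on $\Sigma_{K_t}$ as the pretzel knot $P(2t-5,-3,2)$, and then analyze when that pretzel knot is unknotted or slice.

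\textbf{Stage 1: Identifying the knot type.} First I would take the explicit picture of $\Sigma_{K_t}$ from Figure~\ref{TwistKnots} and draw the loop curve with $m=3$ strands running around the left band and $n=2$ strands running around the right band, with matching orientations on the two bands (the ``loop'' condition). Pushing this curve off the surface, the left band contributes a group of crossings recording the framing coming from the clasp of the twist region and the right band contributes the $t$ full twists. I would then carry out a sequence of Reidemeister moves / flype-type isotopies to bring the diagram into standard pretzel form; tracking how the $t$ full twists in one band interact with the $3$ strands versus $2$ strands should produce the three tangles with $2t-5$, $-3$, and $2$ half-twists respectively. This is essentially a direct but careful diagram chase; I expect it to be routine once the bands are drawn cleanly, though bookkeeping of signs and the exact count $2t-5$ (as opposed to $2t-3$ or $2t-6$) is where small errors creep in, so I would double-check against the known cases $t=3$ and $t=4$.

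\textbf{Stage 2: When is $P(2t-5,-3,2)$ slice or unknotted?} For sliceness, the natural tool is the fact that a genus one knot (which $P(2t-5,-3,2)$ is, being a curve on a genus one surface) is slice only if its Alexander polynomial factors as $f(t)f(t^{-1})$ (Fox--Milnor), and more decisively, one can look for a slice-disk via a band move or appeal to the classification of slice pretzel knots. The cleanest route: compute $\Delta(t)$ for $P(2t-5,-3,2)$ as a function of the parameter, impose Fox--Milnor, and show it forces $t=4$; then exhibit an explicit ribbon move showing $8_{20}=P(3,-3,2)$ is ribbon (this is classical — $8_{20}$ is a well-known ribbon knot). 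For the ``never unknotted'' claim, I would compute the determinant $|\Delta(-1)|$ of $P(2t-5,-3,2)$ and show it is never $1$ for integer $t>2$ (a short modular-arithmetic check), or alternatively observe the Seifert form is never unimodular in the way the unknot requires. Since $P(2t-5,-3,2)$ is fibered/alternating in the relevant range, even simpler invariants (signature, genus) may suffice to rule out the unknot.

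\textbf{Main obstacle.} The genuinely delicate part is Stage 1 — pinning down the precise pretzel parameters, especially the ``$2t-5$'', from the band diagram, because a miscounted half-twist propagates into a wrong Alexander polynomial and breaks Stage 2. Everything after the knot is correctly identified is standard: the Alexander polynomial of $P(p,q,r)$ has a known closed form, Fox--Milnor is a finite check, the determinant computation rules out the unknot, and the ribbon-ness of $8_{20}$ is documented in the literature (e.g., KnotInfo). So I would invest the most care in drawing and simplifying the loop curve, and sanity-check the outcome by independently verifying that at $t=3$ one gets a knot with determinant $9$ (not slice, not unknotted) and at $t=4$ one gets $8_{20}$.
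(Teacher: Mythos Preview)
Your Stage~1 matches the paper exactly: an explicit isotopy starting from the braid form of the $(3,2)$ loop curve (the case of Figure~\ref{TTC}(d) with $m=3$, $n=2$) down to the standard pretzel diagram $P(2t-5,-3,2)$, with the same caveat that the half-twist bookkeeping is where errors creep in.

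Stage~2 is where you diverge, and there is one conceptual error and one soft spot. The error: your parenthetical ``a genus one knot (which $P(2t-5,-3,2)$ is, being a curve on a genus one surface)'' is false. A simple closed curve on a genus-one Seifert surface in $S^3$ can have arbitrarily large Seifert genus as a knot in its own right; indeed the paper's argument for non-unknottedness is precisely to quote Kim--Lee that the Seifert genus of $P(2t-5,-3,2)$ equals $t-1\geq 2$. This slip is harmless to your actual argument, since Fox--Milnor applies to all knots regardless of genus, but it is worth correcting. Your determinant alternative also works for ruling out the unknot (one computes $\det=|(2t-5)(-3)+(-3)(2)+2(2t-5)|=2t+1>1$), though note your sanity check is off by one step: at $t=3$ the determinant is $7$, not $9$; it is $9$ at $t=4$, matching $8_{20}$.

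The soft spot is the sliceness direction. Fox--Milnor alone is riskier than you indicate: the determinant $2t+1$ is a perfect square for infinitely many $t$ (namely $t=4,12,24,40,\dots$), so you cannot stop at the determinant, and even the full Alexander polynomial gives only a necessary condition. The paper instead observes that $P(a,-a,b)$ is ribbon, yielding sliceness at $2t-5+(-3)=0$, i.e.\ $t=4$, and (implicitly) leans on the known slice classification of $3$-strand pretzels for the converse. Your own fallback---``appeal to the classification of slice pretzel knots''---is in fact the route that closes the argument cleanly, and is what you should commit to rather than Fox--Milnor.
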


\begin{remark}
We note that the choices of $m,n$ values made in Theorem~\ref{Twist3} are somewhat special in that they yielded an infinite family of pretzel knots, and that it includes a slice but not unknotted curve. Indeed, by using Rudolph's work in \cite{rudolph}, we can show (see Proposition~\ref{Rtwist}) that the loop curve $(m,n)$ with $m-n=1, n>2$ and $t>4$ on $\Sigma_K$, as a smooth knot in $S^3$, is never slice. The calculation gets quickly complicated once $m-n>1$, and it stays an open problem if in this range one can find other slice but not unknotted curves.    
\end{remark}

\begin{figure}[h] 
    \centering
    \includegraphics[scale = .85]{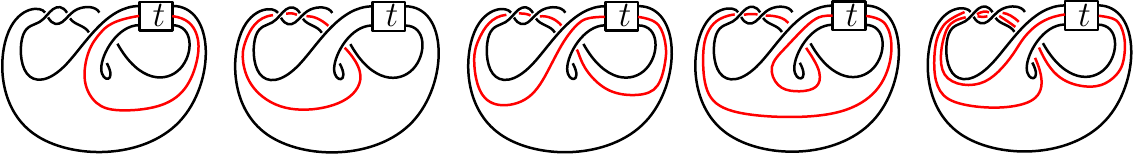}
       \caption{These $5$ curves, from left to right $(0,1), (1,0), (1,1)~\infty, (1,1)~\textrm{loop}$ and $(2,1)~\infty$, on $\Sigma_{K}$ where $K=K_t$, $t\neq 1~\textrm{or}~-1$, are unknotted curves in $S^3.$} %One can check the other $(2,1)$ curve (i.e. $(2,1)~\textrm{loop}$) will be the left-handed trefoil, so not unknotted, in $S^3.$ }
    \label{twistunknots}
\end{figure}

\noindent We can further generalize our study of unknotted essential curves on minimal genus Seifert surface of genus one knots for the Whitehead doubles of non-trivial knots. We first introduce some notation. Let $P$ be the twist knot $K_t$ embedded (where $t=0$ is allowed) in a solid torus $V\subset S^3$, and $K$ denote an arbitrary knot in $S^3$, we identify a tubular neighborhood of $K$ with $V$ in such a way that the longitude of $V$ is identified with the longitude of $K$ coming from a Seifert surface. The image of $P$ under this identification is a knot, $D^{\pm}(K,t)$, called the positive/negative $t$--twisted Whitehead double of $K$. In this situation the knot $P$ is called the pattern for $D^{\pm}(K,t)$ and $K$ is referred to as the companion. Figure~\ref{doubleSeifert} depicts the positive $-3$--twisted Whitehead double of the left-handed trefoil, $D^+(T_{2,-3}, -3)$. If one takes $K$ to be the unknot, then $D^{+}(K,t)$ is nothing but the twist knot $K_t$.

\begin{figure}[h!]
\begin{center}
 \includegraphics[width=10cm]{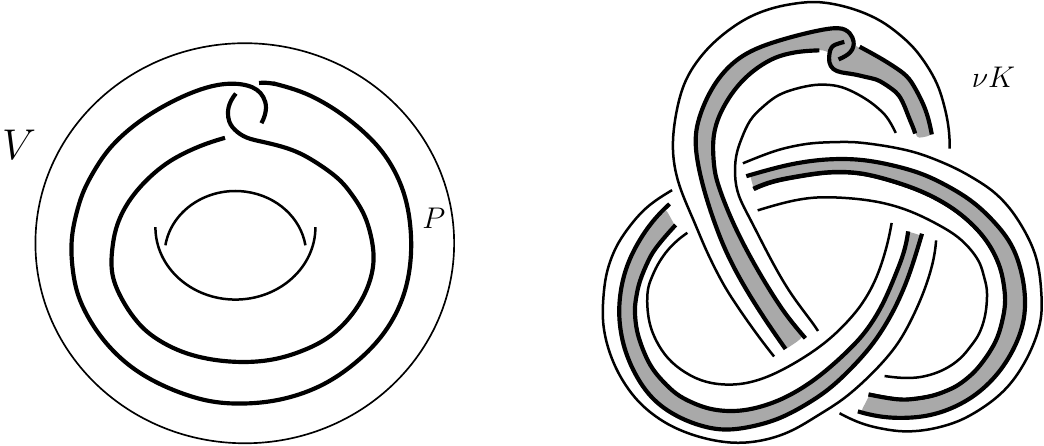}
 \caption{On the right is the solid torus $V\subset S^3$ and the pattern twist knot $P$ (which in this case $t=0$). On the left is the positive $-3$--twisted Whitehead double of the left-handed trefoil, and its standard genus one Seifert surface.}
  \label{doubleSeifert}
\end{center}
\end{figure}

\begin{theorem}\label{Whitehead}
Let $K$ denote a non-trivial knot in $S^3$. Suppose that $\Sigma_K$ is a standard genus one Seifert surface for the Whitehead double of $K$. Then there is precisely one unknotted homologically essential, simple closed curves in the interior of $\Sigma_K$.  
\end{theorem}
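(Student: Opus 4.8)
The plan is to use that $\Sigma_K$ sits inside the companion solid torus $V:=N(K)$ employed in the satellite construction, so that every homologically essential simple closed curve $c$ on $\Sigma_K$ is a satellite knot with companion $K$; nontriviality of $K$ will then force all but one of these curves to be knotted. Concretely, $\Sigma_K$ is the image in $V=N(K)$ of a genus one Seifert surface $\Sigma_P$ of the pattern $P=K_t\subset V$, and a curve $c=(m,n)$ on $\Sigma_K$ becomes the satellite of $K$ with pattern $\hat c:=c\subset V$. The first step is to read off from the standard picture (Figure~\ref{doubleSeifert}) the position of $\Sigma_P$ in $V$: it is the plumbing of a band $\mathcal B_1$ whose core is a push-off of the core of $V$ (winding number $\pm1$ in $V$, and isotopic in $S^3$ to $K$ after the identification $V\cong N(K)$) with a $\pm1$--framed Hopf band $\mathcal B_2$ contained in a $3$--ball $B\subset V$ (winding number $0$). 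This records two facts: (i) the core of $\mathcal B_2$, which is the curve $(0,1)$, bounds a disk in $B\subset V$ and so is unknotted in $S^3$; and (ii) any homologically essential $c=(m,n)$, be it a loop curve or an $\infty$ curve, has winding number $\pm m$ in $V$, since the strands running through $\mathcal B_2$ contribute nothing, so its winding number vanishes exactly when $m=0$, i.e.\ exactly when $c=(0,1)$.

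For uniqueness I would argue as follows. Suppose $c$ is homologically essential on $\Sigma_K$ and unknotted in $S^3$. Then $X:=S^3\setminus N(c)$ is a solid torus, hence contains no essential torus (every incompressible torus in a solid torus is boundary parallel). Apply this to $T:=\partial V\subset X$; it separates $X$ into the knot exterior $E(K)=S^3\setminus\mathrm{int}\,V$ and $V\setminus N(\hat c)$. Since $K$ is nontrivial, $T$ is incompressible in $E(K)$, so any compressing disk or product region for $T$ in $X$ lies on the $V$--side. If $T$ is compressible in $V\setminus N(\hat c)$, its compressing disk is bounded by a meridian of $V$ (the only essential curve on $T$ bounding in the solid torus $V$), and compressing $V$ along it displays $\hat c$ inside a $3$--ball in $V$; thus $\hat c$ is null-homotopic in $V$, so $c$ has winding number $0$ and, by (ii), $c=(0,1)$. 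If instead $T$ is boundary parallel in $X$, the product region must be $V\setminus N(\hat c)\cong T^2\times I$ (the other side $E(K)$ has a single boundary torus, so cannot be the product region), whence $\hat c$ is isotopic in $V$ to the core of $V$ and $c$ is isotopic in $S^3$ to $K$ — impossible, as $c$ is unknotted while $K$ is not. So $c=(0,1)$; and since $(0,1)$ is homologically essential (a non-separating band core in the interior) and unknotted by (i), there is precisely one such curve. The whole argument is insensitive to the sign in $D^{\pm}$ and to the twisting $t$, which only change the framing of $\mathcal B_1$ and the sign of the Hopf band $\mathcal B_2$, not the $3$--manifold topology used above.

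The one place I expect genuine work is the opening step: checking from the standard diagram that $\Sigma_K$, inside $V$, really is the plumbing of a push-off band of $K$ with a Hopf band lying in a ball — in particular that the clasp band has winding number $0$ and the other band winding number $\pm1$ in the $(m,n)$--coordinates fixed earlier in the paper. Once that is pinned down, the remainder reduces to standard facts (incompressibility of companion tori, atoroidality of solid tori) together with the classification of essential curves on $\Sigma_K$ already available from Theorems~\ref{Twist} and \ref{Twist2}.
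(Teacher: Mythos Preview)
Your argument is correct and follows the same overall strategy as the paper's: place $\Sigma_K$ inside the companion solid torus $V=N(K)$, note that exactly one essential curve on $\Sigma_K$ has winding number $0$ in $V$ and is visibly unknotted, and show that nontriviality of $K$ forces every other essential curve to be knotted. The difference lies only in how that last step is executed. The paper works directly with a spanning disk $\Delta$ for a putative unknotted curve $C$ and runs an innermost-circle argument on $\Delta\cap\partial V$: innermost circles are either inessential on $\partial V$ (and can be surgered away) or meridians (yielding a meridian disk of $V$ disjoint from $C$), so $C$ eventually misses a meridian disk and has winding number $0$. You instead analyse the companion torus $\partial V$ inside the solid torus $S^3\setminus N(c)$: it is either compressible in $V\setminus N(c)$ (again producing a meridian disk disjoint from $c$, hence winding number $0$) or boundary-parallel (forcing $V\setminus N(c)\cong T^2\times I$, so $c$ is a core of $V$ and isotopic to $K$, which is impossible). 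Your route trades the hands-on disk surgery for a clean appeal to atoroidality of solid tori and isolates the winding-number-$\pm 1$ case explicitly; the paper's version is slightly more elementary in that it avoids citing any classification, but both reach the same conclusion.

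Two minor points. First, the $(m,n)$ parameterization you need is the general setup from Section~\ref{prel}, not Theorems~\ref{Twist} and~\ref{Twist2}, which are specific to twist knots. Second, before naming the unknotted curve $(0,1)$ you should verify which band carries the clasp in that convention; the self-linking computation in Section~\ref{unknottocontractible} gives the unique unknotted curve self-linking $-1$, which in the twist-knot Seifert matrix corresponds to $(1,0)$ rather than $(0,1)$. This is purely notational and does not affect your argument.
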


\subsection{From unknotted curves to contractible 4-manifolds.}\label{unknottocontractible} The problem of finding unknotted homologically essential curves on a Seifert surface of a genus one knot is interesting on its own, but it is also useful for studying some essential problems in low dimensional topology. We expand on one of these problems a little more. An important and still open question in low dimensional topology asks: {\it which closed oriented homology 3-sphere \footnote{A homology 3-sphere/4-ball is a 3-/4- manifold having the integral homology groups of $S^3/B^4$.} bounds a homology 4-ball or contractible 4-manifold} (see \cite[Problem~$3.20$]{Kirby:problemlist}). This problem can be traced back to the famous Whitney embedding theorem and other important subsequent results due to Hirsch, Wall and Rokhlin \cite{Hirsch, Wall:embedding, Rohlin:3manembedding} in the 1950s. Since then the research towards understanding this problem has stayed active. It has been shown that many infinite families of homology spheres do bound contractible 4-manifolds \cite{CH, Fickle, Stern, Zeeman} and at the same time many powerful techniques and invariants, mainly coming from Floer and gauge theories \cite{Manolescu:T, FintushelStern84, Rohlin:3manembedding} have been used to obtain constraints.  

In our case, using our main results, we will be able to list some more homology spheres that bound contractible 4-manifolds. This is because of the following theorem of Fickle \cite[Theorem~3.1]{Fickle}.

\begin{theorem}[Fickle]\label{fickle}
Let $K$ be a knot in $S^3$ which has a genus one Seifert surface $F$ with a primitive element $[b]\in H_1(F)$ such that the curve $b$ is unknotted in $S^3.$ If $b$ has self-linking $s$, then the homology sphere obtained by $\frac{1}{(s\pm 1)}$ Dehn surgery on $K$ bounds a contractible \footnote{Indeed, this contractible manifold is a {\it Mazur-type} manifold, namely it is a contractible $4$-manifold that has a single handle of each index $0, 1$ and $2$ where the $2$-handle is attached along a knot that links the $1$-handle algebraically once. This condition yields a trivial fundamental group.} 4-manifold. 
\end{theorem}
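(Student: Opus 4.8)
This is Fickle's theorem, so the proof I would give is really an exposition of the handle-theoretic construction in \cite{Fickle}. The plan is to realize the claimed contractible $4$-manifold as the trace of the Dehn surgery, re-handlebodied using the unknotted curve $b$. First I would set up the relative handle picture: think of $S^3$ as the boundary of $B^4$, and attach a $2$-handle along $K$ with framing $\frac{1}{s\pm 1}$; the resulting $4$-manifold $W$ has $\partial W$ the surgered manifold $Y$, but $W$ is of course not contractible — I want to instead build a $4$-manifold $X$ with $\partial X = Y$ directly from a handle decomposition with one handle of each index $0,1,2$. Second, I would use the genus-one Seifert surface $F$ together with the essential curve $b$: since $[b]$ is primitive in $H_1(F)\cong \Z^2$, choose a dual curve $a$ with $[a]\cdot[b]=1$, so $\{[a],[b]\}$ is a symplectic basis and $F$ deformation retracts onto $a\cup b$. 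The band structure of $F$ then says $S^3$ is obtained from $S^1\times S^2$ (the boundary of $S^1\times B^3 = $ one $0$-handle and one $1$-handle) by attaching a $2$-handle along the curve $a$ pushed off appropriately, where $a$ runs once over the $1$-handle.

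**Key steps.** (1) Because $b$ is \emph{unknotted} in $S^3$, I can isotope the picture so that after surgery on $K$ the curve $a$ — which records the attaching of the last $2$-handle — links the $1$-handle algebraically once; this is exactly the Mazur-type condition and forces $\pi_1(X)=1$. The self-linking number $s$ of $b$ enters here: the framing $\frac1{s\pm1}$ on $K$ is precisely what is needed for the homological intersection of the $2$-handle's attaching circle with the belt sphere of the $1$-handle to be $\pm 1$ rather than something larger. (2) Compute homology: with one $0$-, one $1$-, one $2$-handle and the algebraic intersection being $\pm 1$, the chain complex is $\Z \xrightarrow{\pm 1} \Z \to \Z$ (in degrees $2,1,0$), which has trivial reduced homology, so $X$ is a homology ball; combined with $\pi_1(X)=1$ and the Hurewicz/Whitehead theorems, $X$ is contractible. (3) Identify $\partial X$ with the claimed surgery $S^3_{1/(s\pm1)}(K)$ by a Kirby-calculus check: slide the $2$-handle over the $1$-handle to cancel, tracking the boundary, and verify one recovers the surgery description — this is where the unknottedness of $b$ is used a second time, since $b$ bounding a disk in $S^3$ is what lets the relevant handle be slid into cancelling position.

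**Main obstacle.** The hard part will be step (1)/(3): making precise, via explicit isotopies in $S^3$, how the disk bounded by the unknotted curve $b$ converts the surgery trace on $K$ into a handle decomposition in which the $2$-handle hits the $1$-handle algebraically exactly once, and verifying the $\pm 1$ in the framing $\frac{1}{s\pm 1}$ is accounted for by the self-linking $s$ of $b$ versus the surface framing. In other words, the bookkeeping relating the Seifert-framing of $b$ on $F$, its self-linking as a knot in $S^3$, and the surgery coefficient on $K$ is the delicate point; everything else (the homology computation, the $\pi_1$ vanishing, contractibility via Whitehead) is formal once the handle picture is in place. Since this is precisely the content of \cite[Theorem~3.1]{Fickle}, I would present the argument as above and refer to \cite{Fickle} for the detailed isotopies, emphasizing only the role of primitivity of $[b]$ and unknottedness of $b$.
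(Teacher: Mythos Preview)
The paper does not prove this theorem; it is stated as Fickle's result with a citation to \cite[Theorem~3.1]{Fickle} and then used as a black box to deduce the corollary about specific surgeries bounding contractible $4$-manifolds. So there is no proof in the paper to compare your proposal against. Your sketch is a reasonable outline of the Mazur-type handle construction underlying Fickle's argument, and you correctly identify that the delicate point is the Kirby-calculus bookkeeping relating the self-linking $s$, the surface framing, and the surgery coefficient $\frac{1}{s\pm 1}$; but since the paper itself simply cites the result, an exposition at the level you give---with the reference to \cite{Fickle} for the explicit isotopies---is already more than what the paper provides.
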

 
\noindent This result in \cite[Theorem~$1$]{ET} was generalized to genus one knots in the boundary of an acyclic $4$--manifold $W$, and where the assumption on the curve $b$ is relaxed so that $b$ is slice in $W.$ This will be useful for applying to the slice but not unknotted curve/knot found in Theorem~\ref{Twist3}.

\smallskip

\noindent The natural task is to determine self-linking number $s$, with respect to the framing induced by the Seifert surface, for the unknotted curves found in Theorem~\ref{Twist} and \ref{Whitehead}. For this we use the Seifert matrix given by $ S = \begin{psmallmatrix}-1 & -1\\ 0 & t\end{psmallmatrix}$ where we use two obvious cycles--both oriented counterclockwise--in $\Sigma_K$. Recall that, if  $c=(m,n)$ is a loop curve then $m$ and $n$ strands are endowed with the same orientation and hence the same signs. On the other hand for $\infty$ curve they will have opposite orientation and hence the opposite signs. Therefore, given $t$, the self-linking number of $c=(m,n)$ loop curve is $s=-m^2-mn+n^2t$, and the self-linking number of $(m,n)$ $\infty$ curve is $s=-m^2+mn+n^2t$.  A quick calculation shows that the six unknotted curves in Figure~\ref{unknottedcurvestref} for $K_{-1}=T_{2,3}$ share self-linking numbers $s=-1, -3$. As we will see during the proof of Theorem~\ref{Twist} the infinitely many unknotted curves for the figure eight knot $K_1=4_1$ reduce (that are isotopic) to unknotted curves with $s=-1$ or $s=1$. The five unknotted curves in Figure~\ref{twistunknots} for $K_t$, $t<-1$ or $t>1$, share self-linking numbers $s=-1, t$ and $t-2$ (see \cite{CD} and references therein for some relevant work). Finally, Theorem~\ref{Twist3} finds a slice but not unknotted curve which is the curve $(3,2)$ with $t=4$. One can calculate from the formula above that this curve has self-linking number $s=1$. Finally, the unique unknotted curve from Theorem~\ref{Whitehead} has self linking $s=-1$. Thus, as an obvious consequence of these calculations and Theorem \ref{fickle} and its generalization in \cite{ET} we obtain:

\begin{corollary} Let $K$ be any non-trivial knot. Then, the homology spheres obtained by
\begin{enumerate}
\item $-\frac{1}{2}$ Dehn surgery on $D^{+}(K,t)$
\item $\pm\frac{1}{2}$ Dehn surgery on $K_1=4_1$
\item $-\frac{1}{2}$ and $-\frac{1}{4}$ Dehn surgeries on $K_{-1}=T_{2,3}$
\item $-\frac{1}{2}$ and $\frac{1}{t\pm1}$ and $\frac{1}{(t-2)\pm1}$ Dehn surgeries on $K_t$, $t\neq \pm 1$ 
\item  $~~\frac{1}{2}$ Dehn surgery on $K_4$ 
\end{enumerate}
bound contractible $4$-manifolds.
\end{corollary}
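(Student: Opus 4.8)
The plan is to assemble the statement purely as a bookkeeping exercise combining the three ingredients already in place: Fickle's Theorem~\ref{fickle} (together with its acyclic-cobordism generalization from \cite{ET}), the classification of unknotted essential curves on $\Sigma_K$ from Theorems~\ref{Twist}, \ref{Twist3}, and \ref{Whitehead}, and the explicit self-linking computations recorded just above via the Seifert matrix $S=\begin{psmallmatrix}-1&-1\\0&t\end{psmallmatrix}$, i.e. $s=-m^2-mn+n^2t$ for a loop curve $(m,n)$ and $s=-m^2+mn+n^2t$ for an $\infty$ curve. So the strategy is: for each family in the statement, identify the relevant unknotted (or slice) curve $b$ on the Seifert surface, read off its self-linking number $s$, and then apply Theorem~\ref{fickle} with surgery coefficient $\tfrac{1}{s\pm 1}$.

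First I would handle item (1). The companion $K$ is arbitrary nontrivial, $D^+(K,t)$ is a genus one knot, and by Theorem~\ref{Whitehead} its standard Seifert surface carries a unique unknotted essential curve, which the discussion above records as having $s=-1$. Then $\tfrac{1}{s-1}=-\tfrac12$ (and $\tfrac{1}{s+1}$ is undefined), so Theorem~\ref{fickle} yields that $-\tfrac12$ surgery on $D^+(K,t)$ bounds a contractible $4$-manifold. For item (2), the figure eight knot: by Theorem~\ref{Twist}(2) all the unknotted curves reduce up to isotopy to ones with $s=1$ or $s=-1$; for $s=1$ we get coefficient $\tfrac{1}{s-1}$ undefined but $\tfrac{1}{s+1}=\tfrac12$, and for $s=-1$ we get $\tfrac{1}{s-1}=-\tfrac12$, giving both $\pm\tfrac12$ surgeries. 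Item (3), the right-handed trefoil $K_{-1}=T_{2,3}$: by Theorem~\ref{Twist}(1) the six unknotted curves (Figure~\ref{unknottedcurvestref}) have $s=-1$ or $s=-3$, producing coefficients $\tfrac{1}{-1-1}=-\tfrac12$ and $\tfrac{1}{-3-1}=-\tfrac14$.

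For item (4), the general twist knot $K_t$ with $t\neq\pm1$: Theorem~\ref{Twist2} (and for $t=2$ the full Theorem~\ref{Twist2}(3), for $t<-1$ Theorem~\ref{Twist}(1)) shows the five unknotted curves of Figure~\ref{twistunknots} are $(0,1),(1,0),(1,1)\ \infty,(1,1)\ \mathrm{loop},(2,1)\ \infty$; plugging into the self-linking formulas gives $s=-1$ (for $(1,1)\ \infty$ and $(1,0)$), $s=t$ (for $(0,1)$), and $s=t-2$ (for $(1,1)\ \mathrm{loop}$ and $(2,1)\ \infty$), matching the stated values $s=-1,t,t-2$. Then Theorem~\ref{fickle} gives surgery coefficients $-\tfrac12$ from $s=-1$, and $\tfrac{1}{t\pm1}$ and $\tfrac{1}{(t-2)\pm1}$ from $s=t$ and $s=t-2$. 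Finally item (5): Theorem~\ref{Twist3} produces the loop curve $(3,2)$ on $\Sigma_{K_4}$, which is the pretzel knot $P(3,-3,2)=8_{20}$, slice but not unknotted; its self-linking is $s=-9-6+16=1$, so by the generalization of Fickle's theorem in \cite[Theorem~1]{ET} (which only needs $b$ to be slice in an acyclic $4$-manifold) the coefficient $\tfrac{1}{s+1}=\tfrac12$ surgery on $K_4$ bounds a contractible $4$-manifold. I do not expect any serious obstacle here: the only points requiring care are (a) keeping the sign conventions on $s$ consistent with the stated Seifert matrix and with which of $s+1$, $s-1$ is being used in each case, and (b) invoking the \cite{ET} generalization rather than the original Theorem~\ref{fickle} in item (5), since there the curve is merely slice; everything else is direct substitution.
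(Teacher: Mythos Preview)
Your proposal is correct and follows exactly the paper's own approach: compute the self-linking numbers of the unknotted (or slice) curves supplied by Theorems~\ref{Twist}, \ref{Twist2}, \ref{Twist3}, and \ref{Whitehead} via the Seifert form, then feed $s$ into Fickle's Theorem~\ref{fickle} (or its slice generalization from \cite{ET} for item~(5)). One small arithmetic slip to fix in item~(4): the $(1,1)$ $\infty$ curve has $s=-1+1+t=t$, not $s=-1$; the value $s=-1$ comes only from $(1,0)$, while $s=t$ is shared by $(0,1)$ and $(1,1)$ $\infty$, so the set $\{-1,t,t-2\}$ and the resulting surgery coefficients are unaffected.
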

 
\begin{remark}
The 3-manifolds in part (3) are Brieskorn spheres $\Sigma(2,3,13)$ and $\Sigma(2,3,25)$; they were identified by Casson-Harer and Fickle that they bound contractible 4-manifolds. Also, it was known already that the result of $\frac{1}{2}$ Dehn surgery on the figure eight knot bounds a contractible 4-manifold (see \cite[Theorem~$18$]{tosun:survey}) from this we obtain the result in part (2) as the figure eight knot is an amphichiral knot.  
\end{remark}

\begin{remark}
It is known that the result of $\frac{1}{n}$ Dehn surgery on a slice knot $K\subset S^3$ bounds a contractible 4-manifold. To see this, note that at the 4-manifold level with this surgery operation what we are doing is to remove a neighborhood of the slice disk from $B^4$ (the boundary at this stage is zero surgery on $K$) and then attach a 2-handle to a meridian of $K$ with framing $-n$. Now, simple algebraic topology arguments shows that this resulting 4-manifold is contractible. 

It is a well known result that \cite{CG}; a nontrivial twist knot $K=K_t$ is slice if and only if $K=K_2$ (Stevedore's knot $6_1$). So, by arguments above we already know that result of $\frac{1}{n}$ surgery on $K_2$ bounds contractible 4-manifold for any integer $n$. But interestingly we do not recover this by using Theorem~\ref{Twist2}.    
\end{remark}

The paper is organized as follows. In Section~\ref{prel} we set some basic notations and conventions that will be used throughout the paper. Section~\ref{proofs} contains the proofs of Theorem~\ref{Twist}, ~\ref{Twist2} and ~\ref{Twist3}. Our main goal will be to organize, case by case, essential simple closed curves on genus one Seifert surface $\Sigma_K$, through sometimes lengthy isotopies, into explicit positive or negative braid closures. Once this is achieved we use a result due to Cromwell that says the Seifert algorithm applied to the closure of a positive/negative braid closure gives a minimal genus surface. This together with some straightforward calculations will help us to determine the unknotted curves exactly. But sometimes it will not be obvious or even possible to reduce an essential  simple closed curve to a positive or negative closure (see Section~\ref{fep}, ~\ref{ptp} and ~\ref{ptp2}). Further analyzing these cases will yield interesting phenomenon listed in Theorem ~\ref{Twist2} and ~\ref{Twist3}. Section~\ref{white} contains the proof of Theorem~\ref{Whitehead}.

\subsection*{Acknowledgments} We thank Audrick Pyronneau and Nicolas Fontova for helpful conversations. The first, second and third authors were supported in part by a grant from NSF (DMS-2105525). The fourth author was supported in part by grants from NSF (CAREER DMS-2144363 and DMS-2105525) and the Simons Foundation (636841, BT).

\section{Preliminaries}\label{prel} In this section, we set some notation and make preparations for the proofs in the next three sections. In Figure~\ref{SC} we record some basic isotopies/conventions that will be repeatedly used during proofs. Most of these are evident but for the reader's convenience we explain how the move in part (f) works in Figure~\ref{QuestionableMove}. We remind the reader that letters on parts of our curve, as in part $(e)$ of the figure, or in certain location is to denote the number of strands that particular curve has.
 
\begin{figure}[h!]
\begin{center}
 \includegraphics[width=12cm]{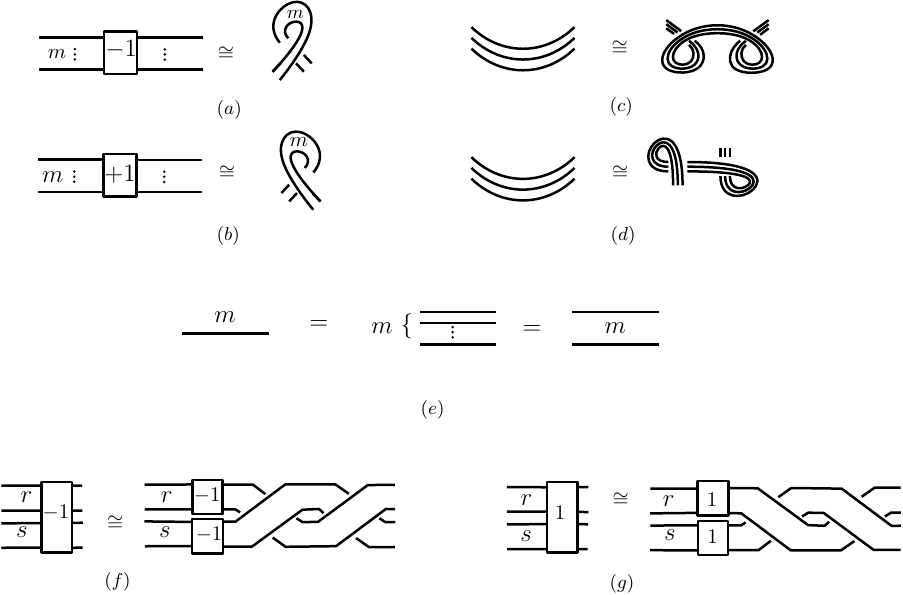}
 \caption{Various isotopies.}
  \label{SC}
\end{center}
\end{figure}

\begin{figure}[h!]
\begin{center}
 \includegraphics[width=11cm]{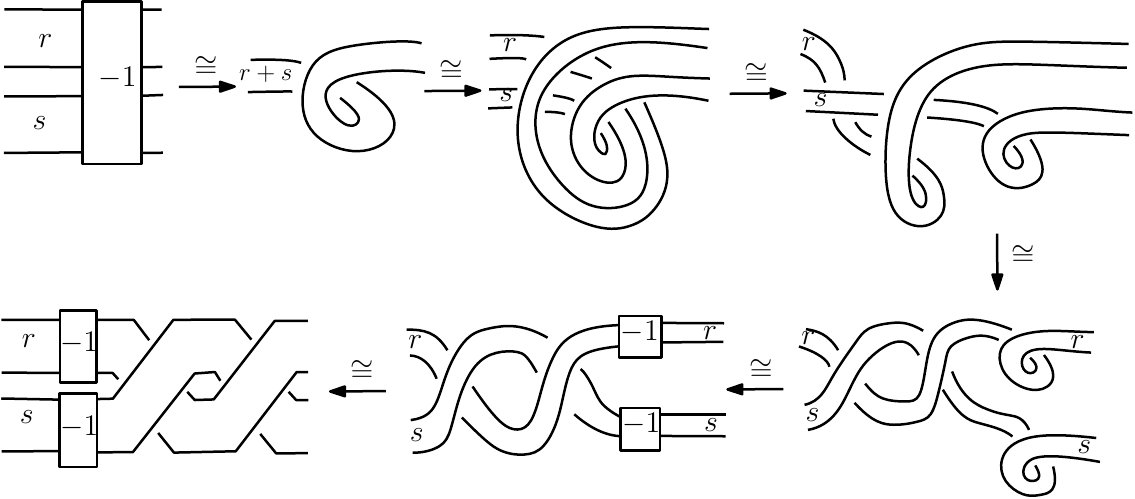}
 \caption{Diagrammatic proof of move in Figure~\ref{SC}(f).}
  \label{QuestionableMove}
\end{center}
\end{figure}

Recall also an essential, simple closed curve on $\Sigma_K$ can be represented by a pair of non-negative integers $(m,n)$ where $m$ is the number of times it runs around the left band and $n$ is the number of times it runs around the right band in $\Sigma_{K}$, and since we are dealing with connected curves we must have that $m,n$ are relatively prime. 

We have two cases: $m>n$ or $n>m$. For an $(m,n)$ curve with $m>n$, after the $m$ strands pass under the $n$ strands on the Seifert surface, it can be split into two sets of strands. For this case, assume that the top set is made of $n$ strands. They must connect to the $n$ strands going over the right band, leaving the other set to be made of $m-n$ strands. Now, we can split the other side of the set of $m$ strands into two sections. The $m-n$ strands on the right can only go to the bottom of these two sections, because otherwise the curve would have to intersect itself on the surface. This curve is notated an $(m,n)\ \infty$ curve. See Figure~\ref{Setup}(a). The other possibility for an $(m,n)$ curve with $m>n$, has $n$ strands in the bottom set instead, which loop around to connect with the $n$ strands going over the right band. This leaves the other to have $m-n$ strands. We can split the other side of the set of $m$ strands into two sections. The $m-n$ strands on the right can only go to the top of these two sections, because again otherwise the curve would have to intersect itself on the surface. The remaining subsection must be made of $n$ strands and connect to the $n$ strands going over the right band. This curve is notated as an $(m,n)$ loop curve. See Figure~\ref{Setup}(b). The case of $(m,n)$ curve with $n>m$ is similar. See Figure~\ref{Setup}(c)$\&$(d).

\begin{figure}[h!]
\begin{center}
 \includegraphics[width=11cm]{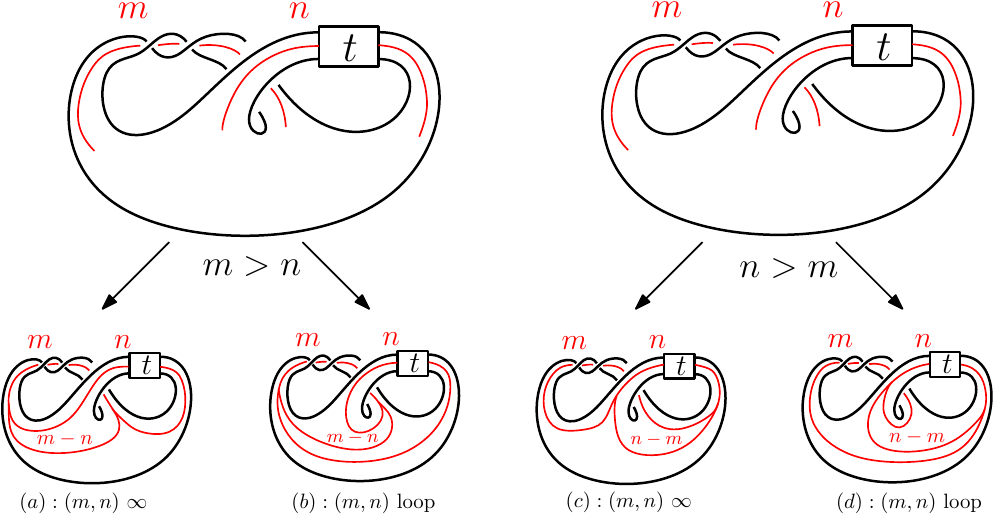}
 \caption{Possibilities for an essential, simple closed curve $(m,n)$ on $\Sigma_K$.}
  \label{Setup}
\end{center}
\end{figure}

\section{Twist Knots}\label{proofs} In this section we provide the proofs of Theorem~\ref{Twist}, \ref{Twist2} and ~\ref{Twist3}. We do this in four parts. Section~\ref{ntp} and \ref{fep} contains all technical details of Theorem~\ref{Twist}, Section~\ref{ptp} contains details of Theorem~\ref{Twist2} and Section~\ref{ptp2} contains Theorem~\ref{Twist3} .

\subsection{Twist knot with $t<0$}\label{ntp} In this section we consider twist knot $K=K_t$, $t\leq -1$. This in particular includes the right-handed trefoil $K_{-1}$. 

\begin{proposition} \label{charnegtwist}
All essential, simple closed curves on $\Sigma_K$ can be characterized as the closure of one of the negative braids in Figure~\ref{NTC}.
\end{proposition}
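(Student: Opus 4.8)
The plan is to take an arbitrary homologically essential simple closed curve $c = (m,n)$ on $\Sigma_K$ with $\gcd(m,n)=1$ and, by a sequence of explicit isotopies supported near the two bands of the Seifert surface, push it into one of the standard braid positions recorded in Figure~\ref{NTC}. First I would split into the two structural cases established in Section~\ref{prel}: whether $c$ is an $\infty$ curve or a loop curve, and within each, whether $m>n$ or $n>m$ (the cases $(1,0)$ and $(0,1)$ being handled trivially and separately). In each case the curve is presented as a collection of parallel strands running over/under the two bands, together with the local picture of how the bands are clasped — and for $t\le -1$ that clasp contributes only \emph{negative} crossings, since the box in Figure~\ref{TwistKnots} has $|t|$ left-handed twists and the band framing contributes a negative clasp as well.

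The key steps, in order: (1) normalize the curve so that the strands over each band are grouped and the only crossings occur in a bounded region coming from the clasp/twist region of $\Sigma_K$; (2) use the elementary isotopies of Figure~\ref{SC} — in particular the strand-sliding move (e) and the move (f) justified in Figure~\ref{QuestionableMove} — to move all the ``return'' strands through the twist region, counting how the $m$, $n$, and $m-n$ (or $n-m$) strand groups interleave; (3) read off the resulting diagram as the closure of a braid, and check that every crossing produced is negative, which is forced because $t\le -1$ and because in both the $\infty$ and loop configurations the local orientations at the clasp (opposite signs on the two bands for $\infty$, equal signs for loop, as described before the Seifert-matrix computation) combine with the left-handed twists to give negative crossings throughout; (4) match the outcome against the finite list of braid shapes in Figure~\ref{NTC}, organized by the residue/size relations among $m$ and $n$. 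I would present this as a case analysis with one representative isotopy sequence drawn in detail and the others indicated as analogous.

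The main obstacle I expect is bookkeeping in step (2)–(3): making sure the isotopies genuinely produce a \emph{braid} (all strands monotone in one direction) rather than a tangle that still has to be combed, and verifying in every sub-case that no positive crossing sneaks in — the sign check is where the hypothesis $t\le -1$ is really used, and it must be done carefully at the clasp where the two bands meet, since that is the one place the framing of $\Sigma_K$ enters. A secondary subtlety is the boundary between sub-cases (e.g. when $m-n$ compares with $n$), where the braid shape in Figure~\ref{NTC} changes; I would make sure the inequalities partitioning the $(m,n)$-plane are exhaustive and that the degenerate cases $m=n\pm 1$, $n=1$, etc., land correctly on the appropriate picture. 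Once the characterization as negative braid closures is in hand, the identification of which closures are unknotted (the remaining assertions of Theorem~\ref{Twist}(1)) follows from Cromwell's theorem that Seifert's algorithm on a negative braid closure yields a minimal-genus surface, together with a short Euler-characteristic computation, but that is carried out after this proposition.
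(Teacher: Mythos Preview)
Your proposal is correct and follows essentially the same four-case approach as the paper: split by $\infty$/loop and by $m>n$/$n>m$, then apply the elementary isotopies of Figure~\ref{SC} to arrive at the negative braids of Figure~\ref{NTC}(a)--(d). One minor point: the further subdivision by how $m-n$ compares with $n$ that you anticipate is not needed for $t\le -1$ --- each of the four cases lands directly on a single braid shape, and move (f) plays no role here (it enters only in the $t>1$ analysis of Section~\ref{ptp}); the moves actually used are the simpler ones such as Figure~\ref{SC}(c).
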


\begin{figure}[h!]
\begin{center}
 \includegraphics[width=11cm]{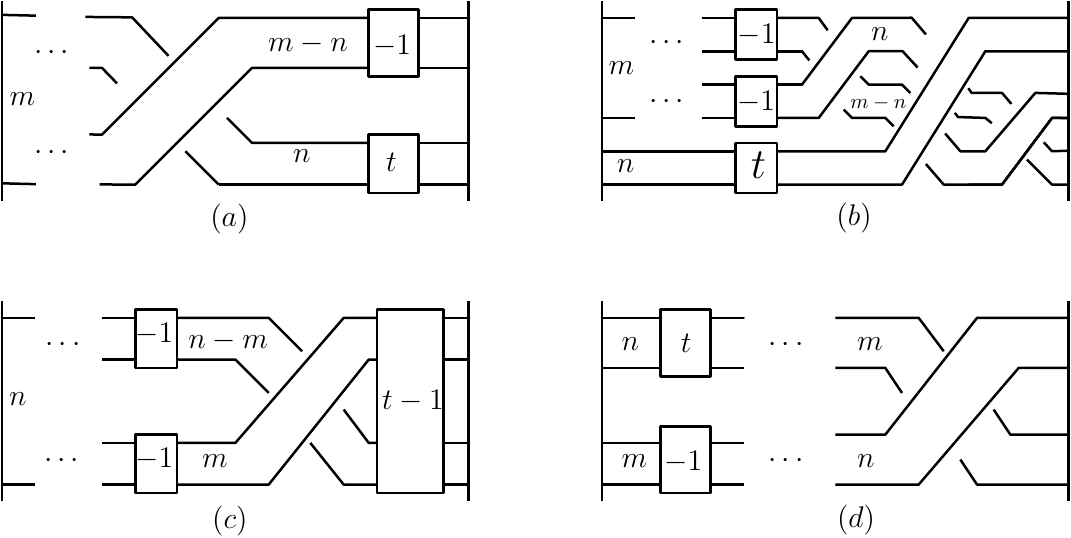}
 \caption{}
  \label{NTC}
\end{center}
\end{figure}

\begin{proof} It suffices to show all possible curves for an arbitrary $m$ and $n$ such that $gcd(m, n) = 1$ are
the closures of either braid in Figure~\ref{NTC}. As mentioned earlier we will deal with cases where both $m, n \geq1$ since cases involving $0$ are trivial. There are four cases to consider. The arguments for each of these will be quite similar, and so we will explain the first case in detail and refer to to the rather self-explanatory drawings/figures for the remaining cases.

\medskip

{\it Case 1:  $(m,n)\ \infty$ curve with $m>n>0$.} This case is explained in Figure~\ref{NTCase1}. The picture on top left is the $(m,n)$ curve we are interested. The next picture to its right is the $(m,n)$ curve where we ignore the surface it sits on and use the convention from Figure~\ref{SC}(e). The next picture is an isotopy where we push the split between $n$ strands and $m-n$ strands along the dotted blue arc. The next three pictures are obtained by applying simple isotopies coming from Figure~\ref{SC}. For example, the passage from the bottom right picture to one to its left is via  Figure~\ref{SC}(c). Finally, the picture on the bottom left, one can easily see that, is the closure of the negative braid depicted in Figure~\ref{NTC}(a).    

\begin{figure}[h!]
\begin{center}
 \includegraphics[width=13cm]{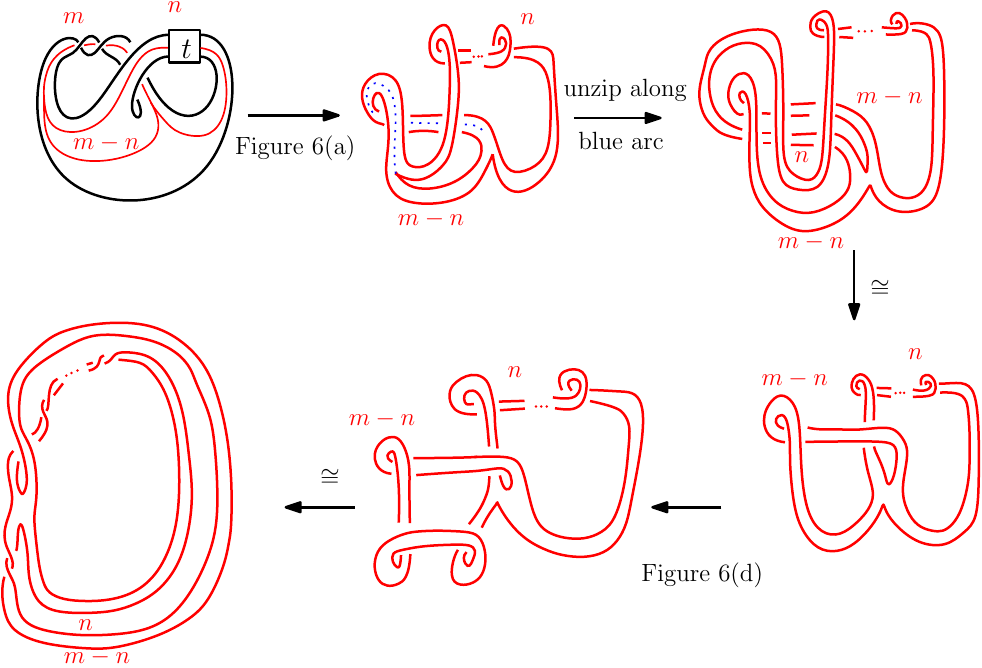}
 \caption{}
  \label{NTCase1}
\end{center}
\end{figure} 

\medskip

{\it Case 2: $(m,n)$ loop curve with $m>n>0$.}  By series isotopies, as indicated in Figure~\ref{NTCase2}, the $(m,n)$  curve in this case can be simplified to the knot depicted on the right of Figure~\ref{NTCase2}, which is the closure of negative braid in Figure~\ref{NTC}(b). 

\begin{figure}[h!]
\begin{center}
 \includegraphics[width=14cm]{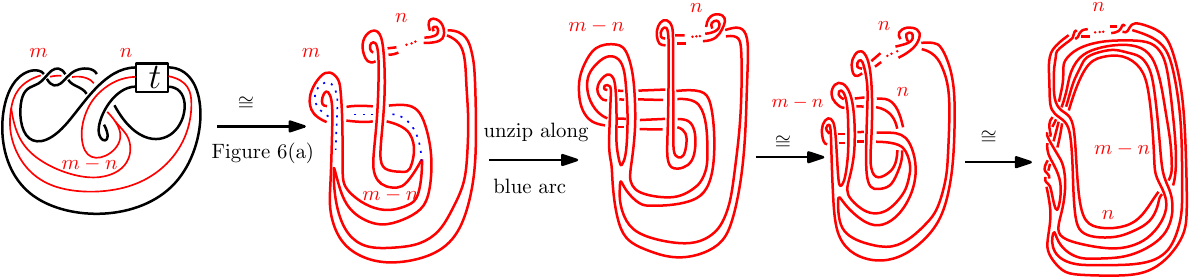}
 \caption{}
  \label{NTCase2}
\end{center}
\end{figure} 

\medskip

{\it Case 3:  $(m,n)\ \infty$ curve with $n>m>0$.} By series isotopies, as indicated in Figure~\ref{NTCase3}, the $(m,n)$  curve in this case can be simplified to the knot depicted on the bottom left of Figure~\ref{NTCase3}, which is the closure of negative braid in Figure~\ref{NTC}(c). 

\begin{figure}[h!]
\begin{center}
 \includegraphics[width=13cm]{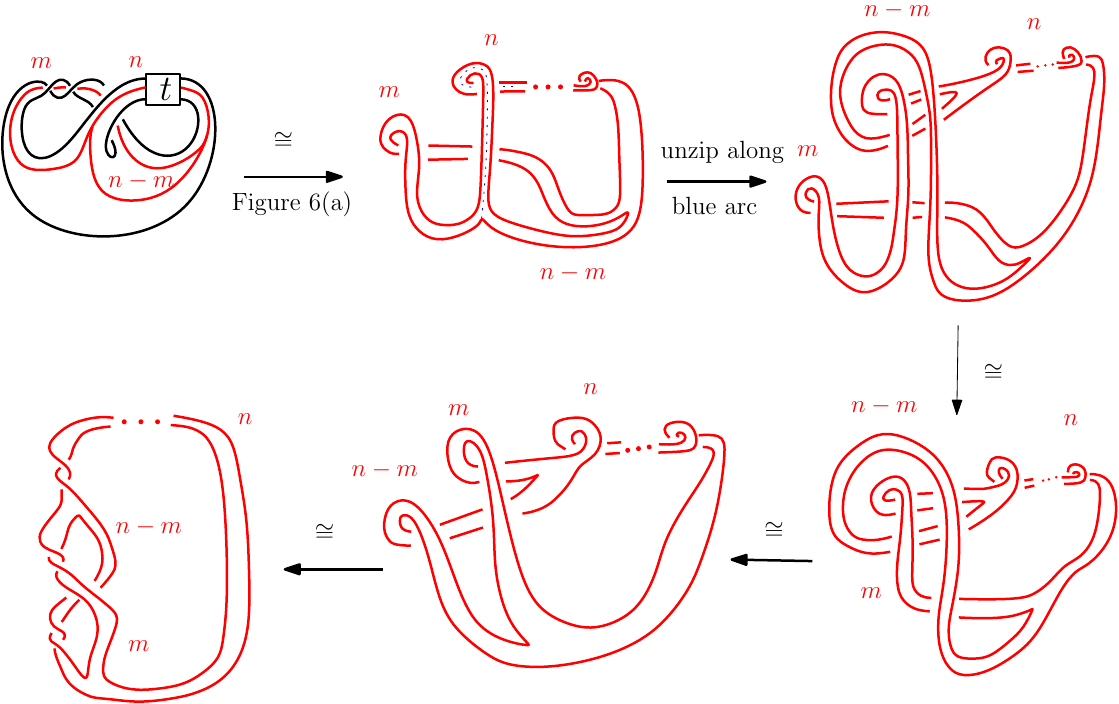}
 \caption{}
  \label{NTCase3}
\end{center}
\end{figure} 

\medskip

{\it Case 4: $(m,n)$ loop curve with $n>m>0$.} By series isotopies, as indicated in Figure~\ref{NTCase4}, the $(m,n)$  curve in this case can be simplified to the knot depicted on the right of Figure~\ref{NTCase4}, which is the closure of negative braid in Figure~\ref{NTC}(d).

\begin{figure}[h!]
\begin{center}
 \includegraphics[width=14cm]{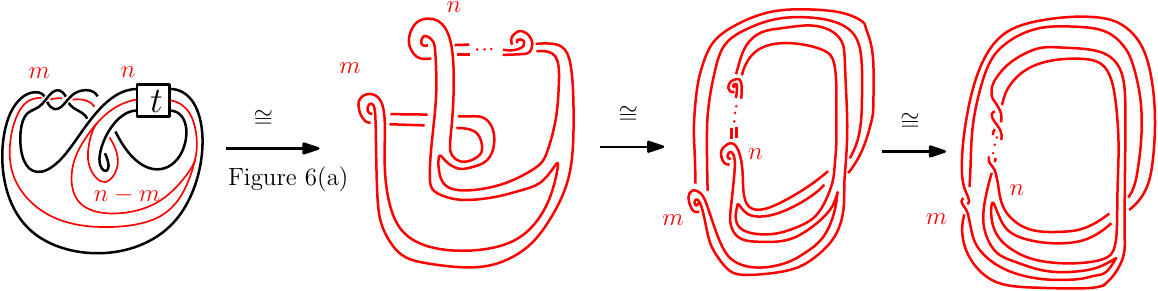}
 \caption{}
  \label{NTCase4}
\end{center}
\end{figure} 
\end{proof}

Next, we determine which of those curves in Proposition~\ref{charnegtwist} are unknotted. It is a classic result due to Cromwell \cite{Cr} (see also \cite[Corollary~$4.2$]{St}) that the Seifert algorithm applied to the closure of a positive braid gives a minimal genus surface.

\begin{proposition}\label{seifertdata} Let $\beta$ be a braid as in Figure~\ref{NTC} and $K = \hat{\beta}$ be its closure. Let $s(K)$ be the number of crossings and $l(K)$ be the number of Seifert circles Seifert circles. Then; 
\[ 
(s(K), l(K))=\begin{cases} 
      (m, |t|n(n-1) + (m - n)(m - n - 1) + n(m - n)) & ~\beta~ \textrm{as in Figure~\ref{NTC}(a)} \\
      (m+n, (|t|+1)n(n-1) + (m - n)(m - n - 1) + nm+2n(m - n)) & ~\beta~ \textrm{as in Figure~\ref{NTC}(b)} \\
      (n, (|t-1|)n(n-1) + (n - m)(n - m - 1) +m(m-1) + m(n - m)) & ~\beta~ \textrm{as in Figure~\ref{NTC}(c)} \\
      (m+n, |t|n(n-1) + m(m - 1) + nm) & ~\beta~ \textrm{as in Figure~\ref{NTC}(d)}  
   \end{cases}
\]
\end{proposition}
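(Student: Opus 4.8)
The plan is to read off the two invariants directly from the explicit braid diagrams in Figure~\ref{NTC}, using the strand-count labels established in Section~\ref{prel}. For the number of crossings $s(K)$, note that each braid in Figure~\ref{NTC} is obtained from an $(m,n)$ curve by the isotopies in the proof of Proposition~\ref{charnegtwist}; the braid index (hence the crossing-free "strand count") is visibly $m$, $m+n$, $n$, or $m+n$ respectively, because the curve passes around the bands that many times and the final braid is drawn on that many strands. Actually $s(K)$ here denotes the number of crossings of the positive/negative braid word, which for a braid on $k$ strands presented as a product is just the word length; I would verify from each picture that the braid word has exactly the claimed length (e.g. $m$ for Figure~\ref{NTC}(a)), tracking the contribution of the $|t|$ full twists in the box and the band-crossing regions separately.

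Next I would compute $l(K)$, the number of Seifert circles produced by Seifert's algorithm applied to the displayed braid closure. Since these are braid closures, Seifert's algorithm on the standard diagram yields one Seifert circle per braid strand — but the relevant count here is instead being recorded as the number of crossings resolved into the surface, i.e. the sum of local crossing contributions coming from (i) the $|t|$ (or $|t-1|$) full twists on $n$ strands, contributing $|t|\,n(n-1)$; (ii) the self-clasp region among the $m-n$ (or $n-m$) strands, contributing $(m-n)(m-n-1)$ or the analogous $m(m-1)$ term; and (iii) the interaction regions between the groups of strands, contributing the mixed terms $n(m-n)$, $2n(m-n)+nm$, $m(n-m)$, or $nm$. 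So the key step is a careful bookkeeping: decompose each braid diagram in Figure~\ref{NTC} into these three types of blocks, count crossings in each block as a function of the strand labels, and sum. The four cases correspond exactly to the four sub-diagrams, and the slightly different coefficient ($|t-1|$ versus $|t|$) in case (c) reflects that in the $n>m$ $\infty$-curve case one full twist is absorbed when the orientation reverses across the left band, as already flagged in the paper's self-linking discussion.

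I would then present the computation case by case, each time referring to the corresponding panel of Figure~\ref{NTC} (and to the intermediate figures Figure~\ref{NTCase1}--\ref{NTCase4}) and writing the crossing count as the explicit sum of the block contributions, so that the stated formula for $(s(K),l(K))$ falls out by collecting terms. Consistency can be checked against the genus-one condition: applying Seifert's algorithm to a negative braid closure on $l$ circles with $s$ crossings gives a surface of Euler characteristic $l-s$, and by Cromwell's theorem this is the minimal genus surface, so $1-\chi = s-l$ should reduce modulo the curve being a knot to the expected value — but I would only use this as a sanity check rather than a step in the proof, since the formulas here are simply being extracted, not derived from an inequality.

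The main obstacle will be the bookkeeping in case (b) and case (d), the loop curves, where the strand groups wrap around both bands with the same orientation and the diagram is genuinely more tangled; getting the mixed interaction terms right (the $2n(m-n)+nm$ in case (b), versus the simpler $nm$ in case (d)) requires carefully tracing which groups of strands cross which, and it is easy to double-count or miss a block. I expect this to be where most of the care is needed, and I would draw the block decomposition explicitly in the figures rather than arguing in prose.
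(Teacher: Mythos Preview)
Your block-decomposition plan --- split each braid into the full-twist block on $n$ strands, the full-twist block on the complementary strands, and the interaction block(s) where one group passes over the other, then sum the crossings --- is exactly what the paper does. Its proof computes case~(a) in detail (``Next, we will analyze the three locations in which crossings occur\ldots'') and asserts the other three are similar, just as you propose.

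The one point to straighten out is the labeling. Despite the proposition's wording, the paper's own proof and the subsequent genus computation $g=\frac{1+l-s}{2}$ make clear that the \emph{first} component of each pair is the number of Seifert circles (equal to the number of braid strands: $m$, $m+n$, $n$, $m+n$ respectively) and the \emph{second} is the number of crossings. Your first paragraph tries to interpret $m$ as the length of the braid word, which cannot be right --- the $|t|$ full twists on $n$ strands alone already contribute $|t|n(n-1)$ crossings. You effectively self-correct in the second paragraph when you recognize that the long expression is a crossing tally, not a Seifert-circle count; just commit to that reading from the start, identify the first entry as the strand count, and your case-by-case bookkeeping will reproduce the paper's proof verbatim.
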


\begin{proof}
Consider the braid $\beta$ as in Figure~\ref{NTC}(a). Clearly, it has $m$ Seifert circles as $\beta$ has $m$ strands. Next, we will analyze the three locations in which crossings occur. First, the $t$ negative full twists on $n$ strands. Since each strand crosses over the other $n-1$ strands, we obtain $|t|n(n-1)$ crossings. Second, the negative full twist on $m-n$ strands produces additional $(m-n)(m-n-1)$ crossings. Lastly, notice the part of $\beta$ where $m-n$ strands overpass the other $n$ strands, and so for each strand in $m-n$ strands we obtain an additional $n$ crossings. Hence for $K=\hat{\beta}$ we calculate:

\[ l(\hat{\beta}) = |t|n(n-1) + (m - n)(m - n - 1) + n(m - n).\]

The calculations for the other cases are similar.

\end{proof}

We can now prove the first part of Theorem \ref{Twist}.

\begin{proof}[Proof of Theorem~\ref{Twist}, part(a)]
Proposition \ref{charnegtwist} proves the first half of our theorem. To determine there are exactly six unknotted curves when $t=-1$ and five when $t<-1$, let $B$ be the set containing the six and five unknotted curves as in Figure \ref{unknottedcurvestref} and \ref{twistunknots}, respectively. It suffices to show an essential, simple closed curve $c$ on $\Sigma_K$ where  $c \not\in B$, cannot be unknotted in $S^3.$ We know by Proposition \ref{charnegtwist}, $c$ is the closure of one of the braids in Figure~\ref{NTC} in $S^3$, where $m,n \geq 1,\ gcd(m,n) = 1$. We show, case by case, that the Seifert surface obtained via the Seifert algorithm for curves $c\not\in B$ in each case has positive genus, and hence it cannot be unknotted. 

\begin{itemize}
\item Let $c=(m,n)$ be the closure of the negative braid as in Figure~\ref{NTC}(a) and $\Sigma_{c}$ its Seifert surface obtained by the Seifert algorithm. There are $m$ Seifert circles and by Proposition~\ref{seifertdata} 
\[
l(c) = |t|n(n-1) + (m-n)(m-n-1) + n(m-n).
\]

Hence,

\[g(\Sigma_c) = \frac{1 + l - s}{2}= \frac{m(m-n-2) + n(|t|(n-1)+1) + 1}{2}.\]

If $m=n+1$, then we get $g(\Sigma_c)=\frac{|t|n(n-1)}{2}$ which is positive as long as $n>1$--note that when $c=(2,1)$ we indeed get an unknotted curve. If $m>n+1$, then $g(\Sigma_c)\geq \frac{n(|t|(n-1)+1)+1}{2}>0$ as long as $n>0$. So, $c\not\in B$ is not an unknotted curve as long as $m>n\geq 1$.

\medskip

\item Let $c=(m,n)$ be the closure of the negative braid as in Figure~\ref{NTC}(b) and $\Sigma_{c}$ its Seifert surface obtained by the Seifert algorithm. There are $n+m$ Seifert circles and by Proposition~\ref{seifertdata} 
\[l(c) = (|t|+1)n(n-1) + (m - n)(m - n - 1) + nm+2n(m - n).\]

Hence,

\[g(\Sigma_c) = \frac{m(m+n-2)+n(|t|(n-1)-1)+1}{2}.\]

One can easily see that this quantity is always positive as long as $n\geq 1$. So, $c\not\in B$ is not an unknotted curve when  $m>n\geq 1$.

\medskip

\item Let $c=(m,n)$ be the closure of the negative braid as in Figure~\ref{NTC}(c) and $\Sigma_{c}$ its Seifert surface obtained by the Seifert algorithm. There are $n$ Seifert circles and by Proposition~\ref{seifertdata} 
\[l(c) = (|t|-1)n(n-1) + (n - m)(n - m - 1) + m(m-1)+m(n - m).\]

Hence,

\[g(\Sigma_c) = \frac{n(|t|(n-1)-m-1)+m^2+1}{2}.\]

This is always positive as long as $m\geq 1$ and $|t|\neq 1$--note that when $c=(1,2)$ and $|t|=1$ we indeed get unknotted curve. So, $c\not\in B$ is not an unknotted curve when  $n>m\geq 1$.

\medskip

\item Let $c=(m,n)$ be the closure of the negative braid as in Figure~\ref{NTC}(d) and $\Sigma_{c}$ its Seifert surface obtained by the Seifert algorithm. There are $n+m$ Seifert circles and by Proposition~\ref{seifertdata} 
\[l(c) = |t|n(n-1) + m(m  - 1) + nm.\]

Hence,

\[g(\Sigma_c) = \frac{|t|n(n-1)+m(m-2)+n(m-1)+1}{2}.\]

One can easily see that this quantity is always positive as long as $m\geq 0$. So, $c\not\in B$ is not an unknotted curve when  $n>m\geq 1$.

\end{itemize}

This completes the first part of Theorem \ref{Twist}.

\end{proof}

\subsection{Figure eight knot}\label{fep} The case of figure eight knot is certainly the most interesting one. It is rather surprising, even to the authors, that there exists a genus one knot with infinitely many unknotted curves on its genus one Seifert surface. As we will see understanding homologically essential curves for the figure eight knot will be similar to what we did in the previous section. The key difference develops in Case 2 and 4 below where we show how, under certain conditions, a homologically essential $(m,n)$ $\infty$ (resp. $(m,n)$ loop) curve can be reduced to the homologically essential $(m-n, 2n-m)$ $\infty$ (resp. $(2m-n, n-m)$ loop) curve, and how this recursively produces infinitely many distinct homology classes that are represented by the unknot, and we will show that certain Fibonacci numbers can be used to describe these unknotted curves. Finally we will show fort he figure eight knot this is the only way that an unknotted curve can arise. Adapting the notations developed thus far we start characterizing homologically essential simple closed curves on genus one Seifert surface $\Sigma_K$ of the figure eight knot $K$.   

\begin{proposition} \label{char41}
All essential, simple closed curves on $\Sigma_K$ can be characterized as the closure of one of the braids in Figure~\ref{41Braids} (note the first and third braids from the left are negative and positive braids, respectively). 
\end{proposition}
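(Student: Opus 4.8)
The plan is to mimic the four-case analysis used in Proposition~\ref{charnegtwist}, adjusting only for the fact that the twisting box in $\Sigma_K$ now contains $t=1$ full right-handed twist rather than $|t|$ left-handed ones. As in the negative case, I would reduce to $m,n\geq 1$ with $\gcd(m,n)=1$, since the classes $(1,0)$ and $(0,1)$ are handled trivially, and then split into the four configurations from Figure~\ref{Setup}: (1) an $\infty$ curve with $m>n>0$; (2) a loop curve with $m>n>0$; (3) an $\infty$ curve with $n>m>0$; (4) a loop curve with $n>m>0$. Each of these is taken to a braid closure by the same sequence of isotopies recorded in Figure~\ref{SC} (in particular the moves (c), (e), (f)) that were used in Cases~1--4 of Proposition~\ref{charnegtwist}; the only bookkeeping difference is the sign of the crossings coming from the single full twist on the relevant subcollection of strands, which is why, as the proposition's parenthetical remark notes, the first braid in Figure~\ref{41Braids} comes out negative and the third comes out positive.

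The key steps, in order, would be: first, set up the strand picture for the $(m,n)$ curve on $\Sigma_K$ exactly as in Section~\ref{prel}, identifying the $n$ versus $m-n$ (or $m$ versus $n-m$) splittings; second, slide the split point across the band along the guiding arc (the ``dotted blue arc'' move from the proof of Proposition~\ref{charnegtwist}); third, apply the conventions of Figure~\ref{SC}(e) to bundle parallel strands and then the local moves (c) and (f) to straighten the diagram into braid position; fourth, read off the resulting braid word and check it matches the corresponding picture in Figure~\ref{41Braids}. I would present Case~1 in full detail with its own figure (the analogue of Figure~\ref{NTCase1}) and then state that Cases~2, 3, 4 follow by entirely parallel isotopies, referring to self-explanatory figures, precisely the exposition strategy already adopted in the $t<0$ proof.

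The main obstacle — and the reason the figure eight is singled out as ``the most interesting'' case — is that for $t=1$ the braids one obtains are genuinely mixed: in Cases~2 and~4 the single positive twist on $n$ strands competes with the negative crossings coming from the band-crossing region, so the closure is \emph{not} a priori a positive or negative braid, and no minimal-genus shortcut à la Cromwell applies directly. This is exactly where the promised reduction $(m,n)\,\infty \rightsquigarrow (m-n,2n-m)\,\infty$ (and the loop analogue $(m,n)\rightsquigarrow(2m-n,n-m)$) has to be extracted from the diagram: one must recognize, after the isotopies, a cancelling configuration that lowers the pair of winding numbers, and verify it is valid precisely when $2n-m\geq 0$ (resp.\ $2m-n\geq 0$). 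For the statement of Proposition~\ref{char41} itself, however, I only need the weaker conclusion that \emph{every} essential curve is \emph{some} braid closure of the stated form, so the delicate reduction and its Fibonacci consequences can be postponed; the proposition is proved once all four case-figures are in place and each is checked against Figure~\ref{41Braids}. I would therefore expect the write-up of Proposition~\ref{char41} to be short, with the real work deferred to the subsequent lemmas on the $\infty$- and loop-curve reductions.
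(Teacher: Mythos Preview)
Your proposal is correct and matches the paper's approach almost exactly: a four-case split by $\infty$/loop and $m>n$/$n>m$, each isotoped via the moves of Figure~\ref{SC} into one of the braid closures in Figure~\ref{41Braids}, with the recursive reductions deferred to the subsequent unknottedness analysis. One small bookkeeping slip: in your numbering the two \emph{mixed} (non-sign-definite) braids occur in Cases~1 and~4 (the $\infty$ curve with $m>n$ and the loop curve with $n>m$), not Cases~2 and~4---indeed the $\infty$ reduction $(m,n)\rightsquigarrow(m-n,2n-m)$ you quote lives in your Case~1, and the paper actually reorders the cases so that your Case~2 (loop, $m>n$, the negative braid) is presented first.
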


\begin{figure}
    \centering
    \includegraphics[scale = .75]{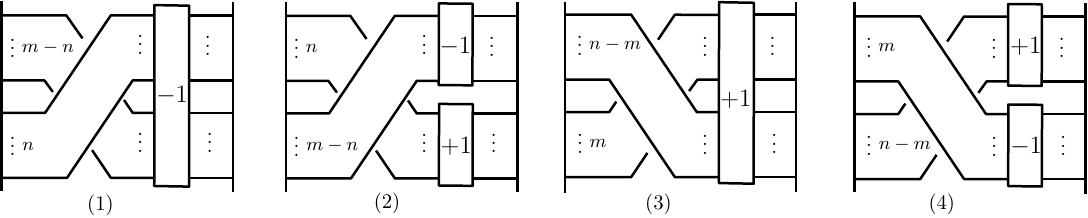}
    \caption{Braid representations of curves on $\Sigma_K$ where $K$ is the figure eight knot. From left to right:  $(m,n)$~\textrm{loop curve} with $m>n$; $(m,n)$ $\infty$ curve with $m > n$; $(m,n)$ $\infty$ curve with $n>m$ ; $(m,n)$~\textrm{loop curve} with $n>m$}
    \label{41Braids}
\end{figure}

\begin{proof}
 
The curves $(1,0)$, $(0,1)$ are clearly unknots. Moreover, because $\textrm{gcd}(m,n)=1$, the only curve with $n=m$ is $(1,1)$ curve, which is also unknot in $S^3$. For the rest of the arguments below, we will assume $n>m$ or $m>n$. There are four cases to consider:

{\it Case 1: $(m,n)$ loop curve with $m>n>0$.}

This curve can be turned into a negative braid following the process in Figure~\ref{transformingLM}.

\begin{figure}[h!]
    \centering
    \includegraphics[scale = 0.5]{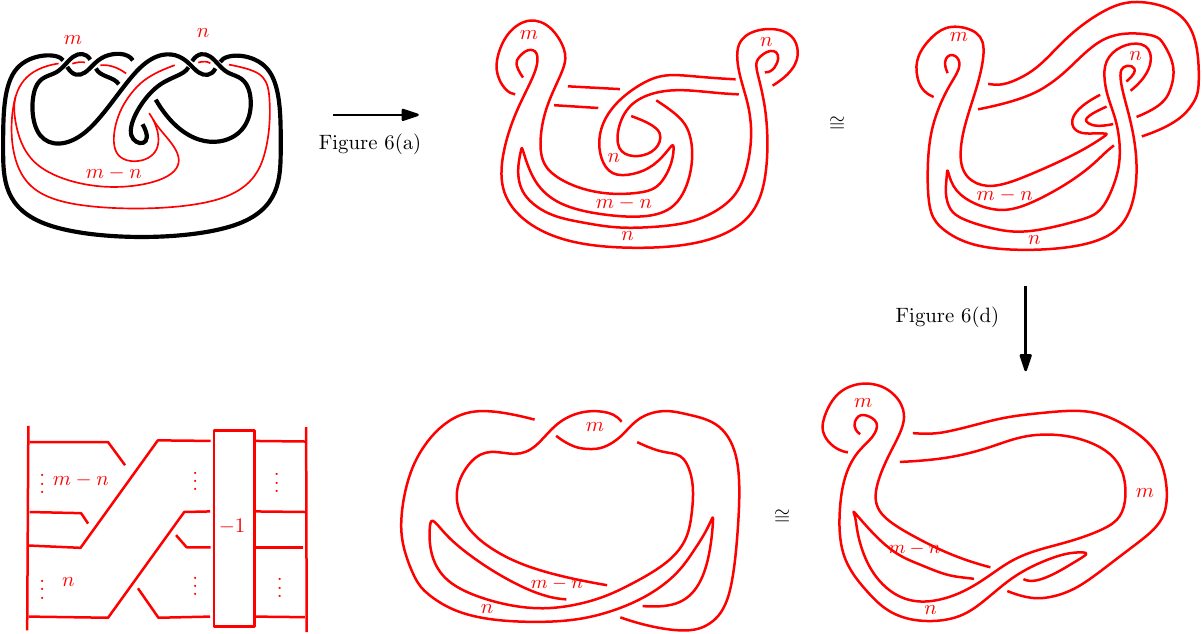}
    \caption{}
        \label{transformingLM}
\end{figure}

\medskip

{\it Case 2: $(m,n)$ $\infty$ curve with $m> n>0$.} As mentioned at the beginning, this case (and Case 4) are much more involved and interesting (in particular the subcases of Case 2c and 4c). Following the process as in Figure~\ref{transformingLN}, the curve can be isotoped as in the bottom right of that figure, which is the closure of the braid on its left--that is the second braid from the left in Figure~\ref{41Braids}.

\begin{figure}[h!]
    \centering
    \includegraphics[scale = 0.5]{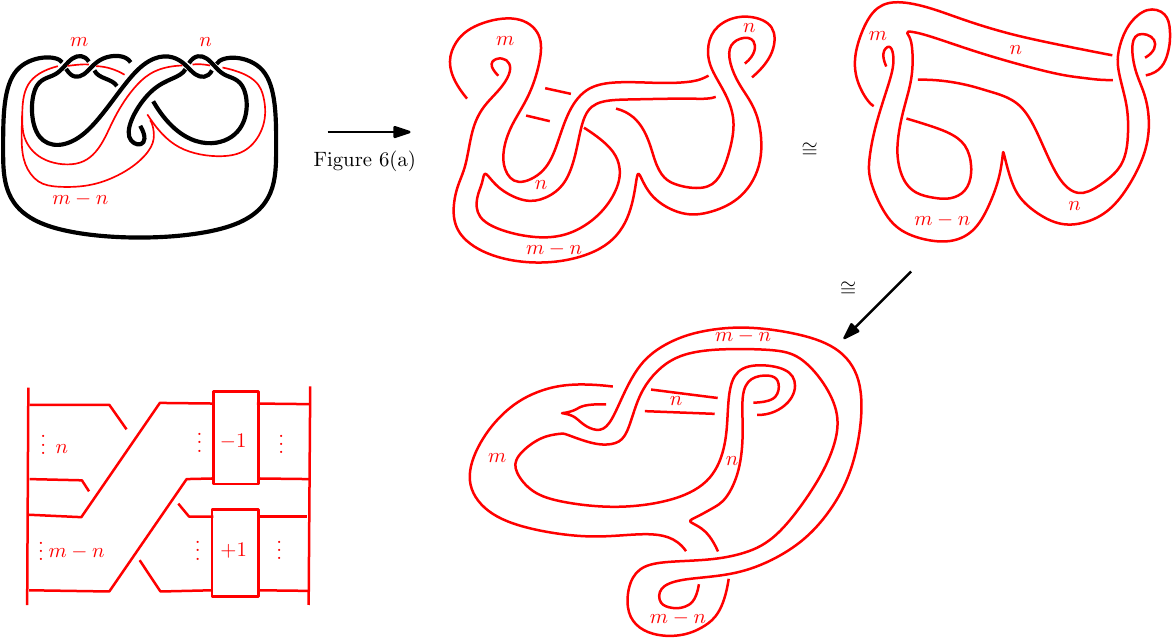}
    \caption{}
    \label{transformingLN}
\end{figure}

{\it Case 3: $(m,n)$ $\infty$ curve with $n>m>0$.} This curve can be turned into a positive braid following the process in Figure~\ref{transforming8N}.

\begin{figure}[h!]
    \centering
    \includegraphics[scale = 0.5]{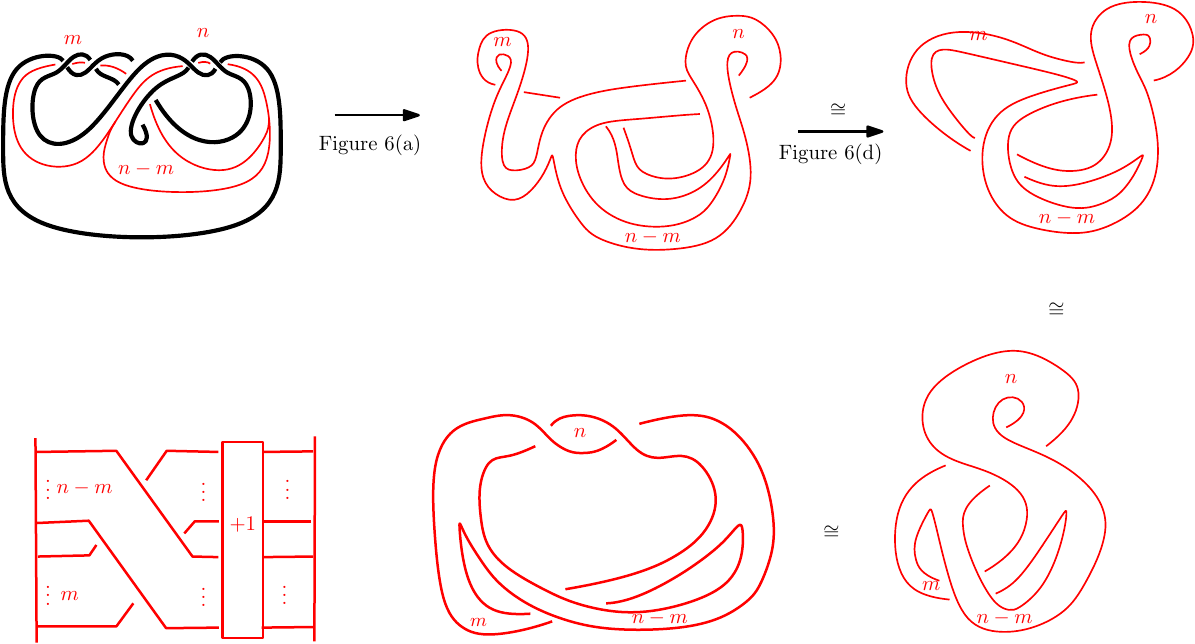}
    \caption{}
    \label{transforming8N}
\end{figure}

\medskip

{\it Case 4: $(m,n)$ loop curve with $n> m>0$}. This curve can be turned into the closure of a braid following the process in Figure~\ref{transforming8M}.

\begin{figure}[h!]
    \centering
    \includegraphics[scale = 0.5]{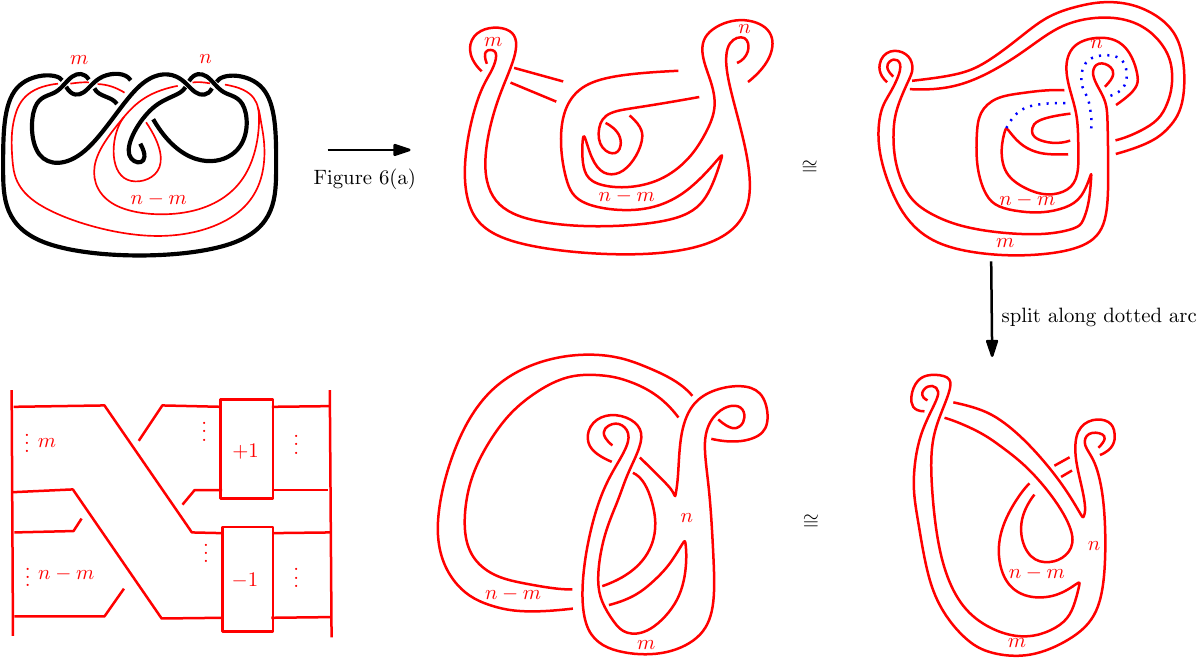}
    \caption{}
    \label{transforming8M}
\end{figure}

\end{proof}

We next determine which of these curves are unknotted:

\begin{proposition}\label{41unknots}
A homologically essential curve $c$ characterized as in Proposition~\ref{char41} is unknotted if and only if it is $(a)$ a trivial curve $(1,0)$ or $(0,1)$, $(b)$ an $\infty$ curve in the form of $(F_{i+1},F_{i})$, or $(c)$ a loop curve in the form of $(F_{i},F_{i+1})$.\\
\end{proposition}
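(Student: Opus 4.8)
The plan is to mirror the strategy of part (a) of Theorem~\ref{Twist}: use the braid characterization from Proposition~\ref{char41}, compute the Seifert data for each of the four braid families, and isolate exactly when the Seifert-algorithm surface has genus zero. The new ingredient, relative to the $t<0$ case, is that two of the four families (Case 2 and Case 4 of Proposition~\ref{char41}) are \emph{not} positive or negative braid closures, so Cromwell's theorem does not directly give a minimal-genus bound; instead one must use the recursive reduction alluded to in the introduction to Section~\ref{fep}, namely $(m,n)\ \infty \rightsquigarrow (m-n,2n-m)\ \infty$ and $(m,n)\ \text{loop} \rightsquigarrow (2m-n,n-m)\ \text{loop}$.

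\textbf{Step 1 (the braid-closure cases).} For the loop curve with $m>n$ (first braid in Figure~\ref{41Braids}, a negative braid) and the $\infty$ curve with $n>m$ (third braid, a positive braid), I would count crossings and Seifert circles exactly as in Proposition~\ref{seifertdata}, then apply Cromwell's theorem to conclude the Seifert-algorithm surface is minimal genus. Setting $g(\Sigma_c)=\tfrac{1+l-s}{2}=0$ and solving the resulting Diophantine condition under $\gcd(m,n)=1$, $m,n\geq 1$, should force $(m,n)=(1,0),(0,1),(1,1)$ or, in the $n>m$ $\infty$-case, the family $(F_i,F_{i+1})$-type solutions; I expect the arithmetic to collapse to a Fibonacci recursion precisely because the genus-zero condition is a quadratic form in $m,n$ equivalent to $m^2-mn-n^2=\pm1$, whose positive solutions are consecutive Fibonacci pairs.

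\textbf{Step 2 (the recursive cases).} For the $\infty$ curve with $m>n$ and the loop curve with $n>m$, I would first establish that the isotopy in Case~2 (resp. Case~4) reduces $(m,n)\ \infty$ to $(m-n,\,2n-m)\ \infty$ (resp. $(m,n)\ \text{loop}$ to $(2m-n,\,n-m)\ \text{loop}$) as a knot in $S^3$, and that this reduction terminates: iterating the linear map decreases $m+n$ until one lands in a base case already handled in Step~1 (a trivial curve, a $(1,1)$ curve, or one of the braid-closure families with the opposite inequality between the two band-multiplicities). Tracking the effect of the inverse map $(a,b)\mapsto(a+b,\,2a+b)$ — equivalently $\begin{psmallmatrix}1&1\\2&1\end{psmallmatrix}$ — on the pair $(1,1)$ generates exactly $(1,1)\sim(3,2)\sim(8,5)\sim(21,13)\sim\cdots$, i.e. $(F_{2i+1},F_{2i-1})$; combined with the one-step reductions that land on $(2,1)$ and the parity bookkeeping, this yields all unknotted members of these two families as the stated Fibonacci pairs $(F_{i+1},F_i)$ $\infty$-curves and $(F_i,F_{i+1})$ loop curves. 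Conversely, if the terminal curve of the reduction is \emph{not} one of the genus-zero base cases, the knot is nontrivial (it equals a knot with a positive-genus Seifert-algorithm surface by Step~1), so $c$ is knotted.

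\textbf{Main obstacle.} The delicate point is the reduction step: verifying rigorously, from the figures, that the isotopy in Case~2/Case~4 of Proposition~\ref{char41} really implements the claimed linear map on $(m,n)$ and that it strictly decreases complexity so the recursion bottoms out, and then checking that no spurious unknotted curves appear at intermediate stages and that distinct Fibonacci pairs give genuinely distinct isotopy classes on $\Sigma_K$ (so the family is infinite). Everything else is a finite case analysis plus the quadratic-form computation identifying $m^2\pm mn\mp n^2=\pm1$ with consecutive Fibonacci numbers.
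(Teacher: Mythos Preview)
Your overall architecture matches the paper's, but Step~1 contains a genuine error and Step~2 is missing a crucial subcase.

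\textbf{Step 1 is wrong about where the Fibonacci solutions live.} You expect the genus-zero equation for the honest positive/negative braid closures (loop with $m>n$, and $\infty$ with $n>m$) to cut out the Fibonacci pairs via $m^2-mn-n^2=\pm1$. It does not. For the loop curve with $m>n$ the paper computes
\[
g(\Sigma_c)=\frac{n(m-n)+(m-1)^2}{2},
\]
which is strictly positive for every $m>n\geq 1$; the $\infty$ case with $n>m$ is symmetric. These two braid-closure families contribute \emph{no} unknots whatsoever. The quadratic form $m^2-mn-n^2$ is (up to sign) the self-linking number of the curve on $\Sigma_K$ when $t=1$, not the Seifert-algorithm genus of the braid closure; it happens to be $\pm1$ on the Fibonacci pairs, but that is an a posteriori consequence of the recursion, not the mechanism of the proof. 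All the Fibonacci unknots come from the other two families.

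\textbf{Step 2 is missing the branch that makes the recursion well-defined and terminating.} You treat the reduction $(m,n)\ \infty\rightsquigarrow(m-n,2n-m)\ \infty$ as always available, but it requires $2n-m>0$, i.e.\ $m-n<n$. The paper splits Case~2 into three subcases: (2a) $m-n=n$ forces $(m,n)=(2,1)$, unknotted; (2b) $m-n>n$, where a \emph{further} isotopy produces a new negative braid on $m-n$ strands whose genus $\tfrac{(m-2n)n+(m-n)(m-n-2)+1}{2}$ is always positive; and only (2c) $m-n<n$ admits the recursion. Case~4 is analogous. Without subcase~2b/4b you have no way to handle, say, the $(5,2)$ $\infty$ curve (here $m-n=3>2=n$, so $2n-m<0$ and your map lands outside the admissible range), and your termination argument collapses. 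The correct logic is: iterate the reduction while you stay in (2c)/(4c); the moment you exit, you are either at a base unknot $(1,1),(2,1),(1,2)$ or in a positive/negative braid closure that is provably knotted by a genus count.

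\textbf{Minor bookkeeping.} The inverse of $(m,n)\mapsto(m-n,2n-m)$ is $(a,b)\mapsto(2a+b,\,a+b)$, so the matrix is $\begin{psmallmatrix}2&1\\1&1\end{psmallmatrix}$, not $\begin{psmallmatrix}1&1\\2&1\end{psmallmatrix}$; and the orbit of $(1,1)$ is $(F_{2i},F_{2i-1})$, not $(F_{2i+1},F_{2i-1})$.
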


\begin{proof}
Let $c$ denote one of these homologically essential curve listed in Proposition~\ref{char41}. We will analyze the unknottedness of $c$ in four separate cases.

{\it Case 1.} Suppose $c=(m,n)$ is the closure of the negative braid in the bottom left of Figure~\ref{transformingLM}. Note the minimal Seifert Surface of c, $\Sigma_c$, has $(n)(m-n)+(m)(m-1)$ crossings and $m$ Seifert circles. Hence;
\[g(\Sigma_c) = \frac{n(m-n)+(m-1)^2}{2}\]
This is a positive integer for all $m,n$ with $m>n$. So $c$ is never unknotted in $S^3$ as long $m>n>0$ .

{\it Case 2.} Suppose $c$ is of the form in the bottom right of Figure~\ref{transformingLN}. Since this curve is not a positive or negative braid closure, we cannot directly use Cromwell's result as in Case 1 or the previous section. There are three subcases to consider.

{\it Case 2a: $m-n=n$.} Because $m$ and $n$ are relatively prime integers, we must have that $m=2, n=1$, and we can easily see that this $(2,1)$ curve unknotted.

{\it Case 2b: $m-n>n$.} This curve can be turned into a negative braid following the process in Figure~\ref{transforming2b}.  More precisely, we start, on the top left of that figure, with the curve appearing on the bottom right of Figure~\ref{transformingLN}. We extend the split along the dotted blue arc and isotope $m$ strands to reach the next figure. We note that this splitting can be done as by the assumption we have $m-2n>0$. Then using Figure~\ref{SC}(a) and further isotopy we reach the final curve on the bottom right of Figure~\ref{transforming2b} which is obviously the closure of the negative braid depicted on the bottom left of that picture.

\begin{figure}[h!]
    \centering
    \includegraphics[scale = 0.5]{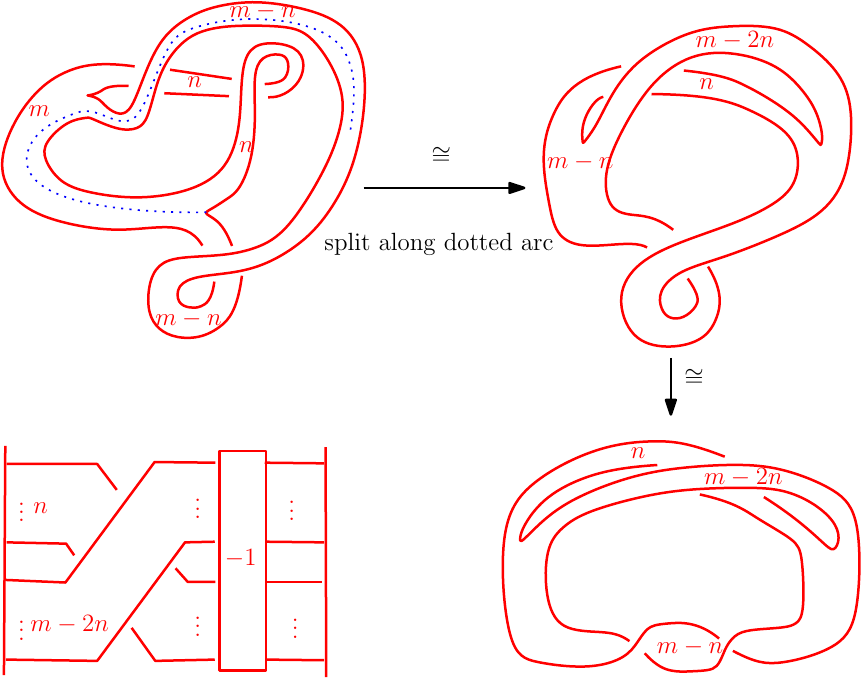}
    \caption{}
    \label{transforming2b}
\end{figure}

The minimal Seifert Surface coming from this negative braid closure contains $m-n$ circles and $(m-2n)n+(m-n)(m-n-1)$
twists. Hence;

\[g(\Sigma_c)=\frac{(m-2n)n+(m-n)(m-n-2)+1}{2}.\]

This a positive integer for all integers $m,n$ with $m-n>n$. So, $c$ is not unknotted in $S^3$.

{\it Case 2c: $m-n<n$.} We organize this curve some more. We start, on the top left of Figure~\ref{transforming2c}, with the curve that is appearing on the bottom left of Figure~\ref{transformingLN}. We extend the split along the dotted blue arc and isotope $m-n$ strands to reach the next figure, After some isotopies we reach the curve on the bottom left of Figure~\ref{transforming2c}. In other words, this subcase of Case 2c leads to a reduced version of the original picture (top left curve in Figure~\ref{transformingLN}), in the sense that the number of strands over either handle is less than the number of strands in the original picture.   

\begin{figure}[h!]
    \centering
    \includegraphics[scale = 0.5]{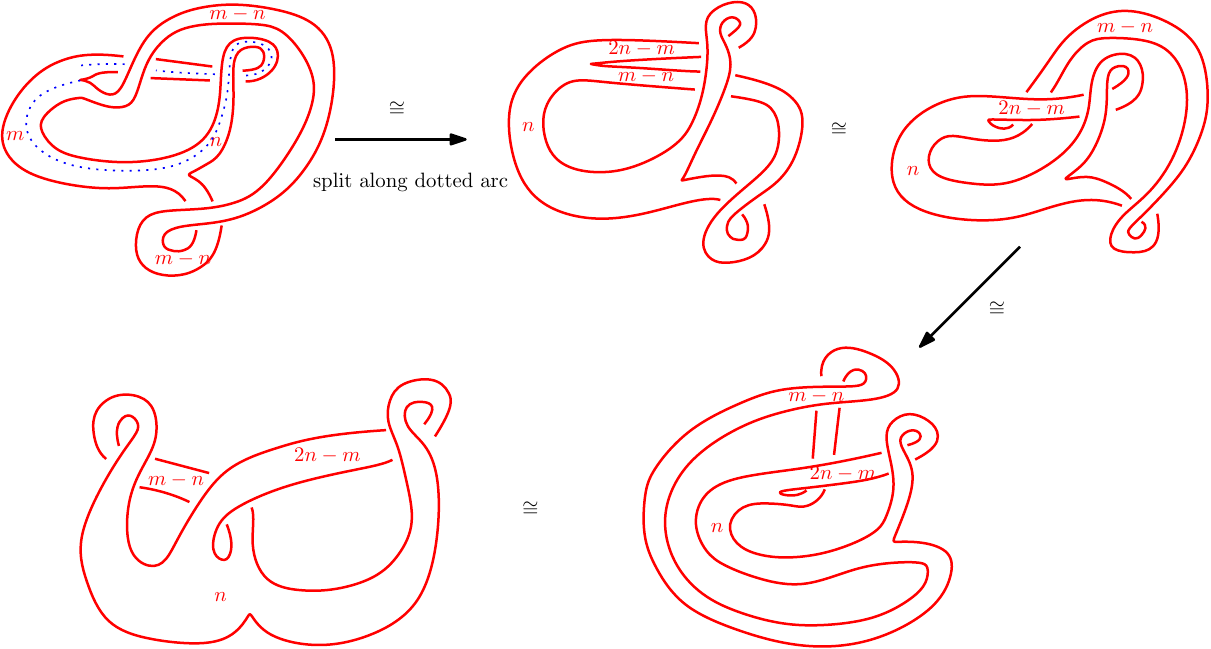}
    \caption{}
    \label{transforming2c}
\end{figure}

This case can be further subdivided depending on the relationship between $2n-m$ and $m-n$, but this braid (or rather its closure) will turn into a $(m-n, 2n-m)$ $\infty$ curve when $m-n>2n-m$:

\underline{\bf Case 2c-i: $2n-m = m-n$.} This simplifies to $3n=2m$. Because $gcd(m,n)=1$, this will only occur for $m=3$ and $n=2$, and the resulting curve is $(1,1)$ $\infty$ curve. In other words here we observed that $(3,2)$ curve has been reduced to (1,1) curve

\underline{\bf Case 2c-ii: $2n-m > m-n$.} This means that we are dealing with a curve under Case 3, and we will see that all curves considered there are positive braid closures.

\underline{\bf Case2c-iii: $2n-m < m-n$.}  This means we are back to be under Case 2. So for $m>n>m-n$, the $(m,n)$ $\infty$ curve is isotopic to the $(m-n, 2n-m)$ $\infty$ curve. This isotopy series will be notated $(m,n) \sim (m-n, 2n-m)$. Equivalently, there is a series of isotopies such that $(m-n, 2n - m) \sim (m,n)$. If $(k,l)$ denote a curve at one stage of this isotopy, then $(k,l) \sim ((k +l) + k, k+l)$. So, starting with $k = l = 1$, we recursively obtain:
\[(1,1) \sim (3,2) \sim (8,5) \sim (21, 13) \sim (55, 34) \sim \cdots\]

In a similar fashion, if we start with $k = 2,\ l = 1$ we obtain:
\[(2,1) \sim (5,3) \sim (13,8) \sim (34, 21) \sim (89, 55) \sim \cdots\]

Notice every curve $c$ above is of the form $c = (F_{i + 1}, F_i), i \in \mathbb{Z}_{>0}$ where $F_i$ denotes the $i^{th}$ {\it Fibonacci number}. We will call these {\it Fibonacci curves}. We choose $(1,1)$ and $(2,1)$ because they are known unknots. As a result, this relation generates an infinite family of homologically distinct simple closed curves on $\Sigma_K$ that are unknotted in $S^3$.
 
{\it Case 3}. Suppose a curve, $c$, is of the form $(3)$, which is the closure of the positive braid depicted in the bottom left of Figure~\ref{transforming8N}.  An argument similar to that applied to Case 1 can be used to show $c$ is never unknotted in $S^3.$ 

\medskip

{\it Case 4}. Suppose $c$ is of the form as in the bottom middle of Figure~\ref{transforming8M}. Similar to Case 2, there are three subcases to consider.

\medskip

{\it Case 4a:  $m = n - m$.} Then $2m=n$. Because $gcd(m,n)=1$, $m=1$ and $n=2$, resulting in unknot.

{\it Case 4b: $n-m>m$.} Then $n-2m>0$ and following the isotopies in Figure~\ref{transforming4b}, the curve can be changed into the closure of positive braid depicted on the bottom right of that figure.

\begin{figure}[h!]
    \centering
    \includegraphics[scale = 0.5]{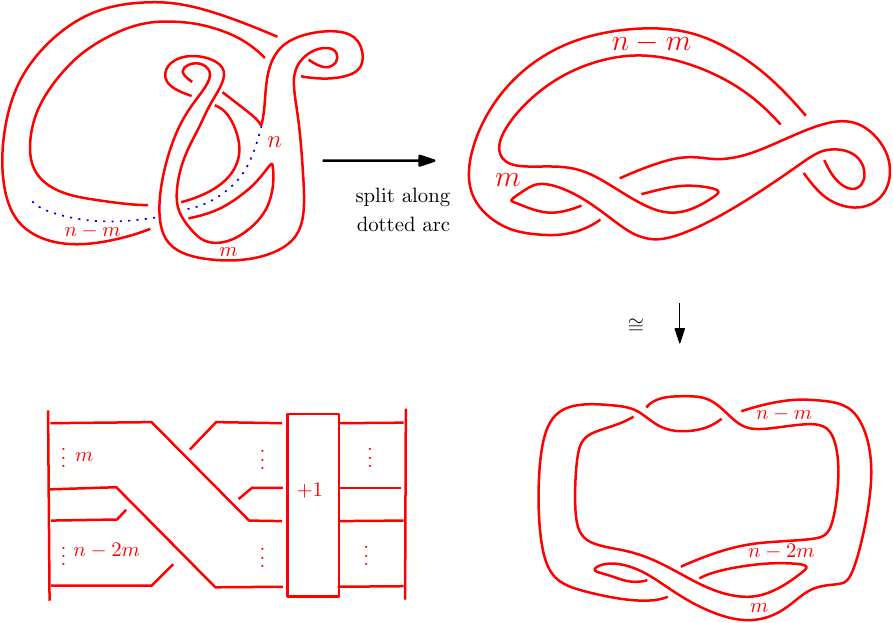}
    \caption{}
    \label{transforming4b}
\end{figure}

Identical to Case 2b, the curve $c$ in this case is never unknotted in $S^3.$

{\it Case 4c: $m>n-m$}. Then $2m-n>0$, and we can split the $m$ strands into two: a $n-m$ strands and a $2m-n$ strands.

\begin{figure}[h!]
    \centering
    \includegraphics[scale = 0.75]{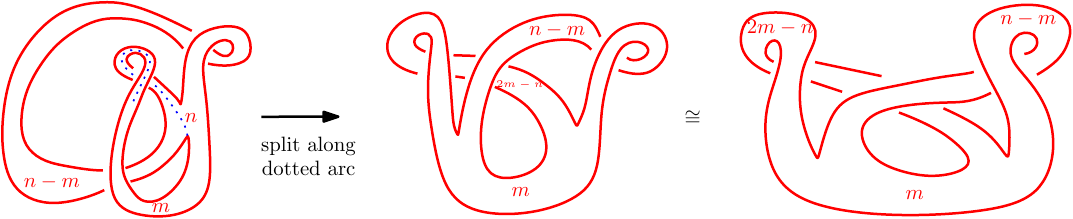}
    \caption{}
    \label{transforming4c}
\end{figure}

This case can be further subdivided depending on the relationship between $n-m$ and $2m-n$, but this braid will turn into a $(2m-n, n-m)$ loop curve when $n-m>2m-n$:

\underline{\bf Case 4c-i: $2m-n = n-m$.}
This simplifies to $3m=2n$. Because $gcd(m,n)=1$, this will only occur for $m=2$ and $n=3$, and the resulting curve is a $(1,1)$ loop curve.

\underline{\bf Case 4c-ii: $n-m < 2m-n$.}  This means that we are dealing with a curve under Case 1, and we saw that all curves considered there are negative braid closures.

\underline{\bf Case 4c-iii: $n-m > 2m-n$.} This means that we are back to be under Case 4. So for $n>m>n-m$, an $(m,n)$ loop curve has the following isotopy series: $(m,n) \sim (2m-n,n-m)$. If $(k,l)$ denote a curve at one stage of this isotopy, then the reverse also holds: $(k,l) \sim (k+l, (k+l)+l)$. As a result, much like Case 2c, we can generate two infinite families of unknotted curves in $S^3$:
\[(1,1) \sim (2,3) \sim (5,8) \sim (13, 21) \sim (34, 55) \sim \cdots ~\textrm{and} \]
\[(1,2) \sim (3,5) \sim (8,13) \sim (21, 34) \sim (55, 89) \sim \cdots\]

Notice every curve $c$ is of the form $c = (F_{i}, F_{i + 1}), i \in \mathbb{Z}_{>0}.$ Finally, we show that this is the only way one can get unknotted curves. That is, we claim:

\begin{lemma} 
If a homologically essential curve $c$ on $\Sigma_K$ for $K=4_1$ is unknotted, then it must be a Fibonacci curve. 
\end{lemma}
\begin{proof} From above, it is clear that if our curve c is Fibonacci, then it is unknotted. So it suffices to show if a curve is not Fibonacci then it is not unknotted. We will demonstrate this for loop curves under Case 4. Let $c$ be a loop curve that is not Fibonacci but is unknotted. Since it is unknotted, it fits into either Case~4a or 4c. But the only unknotted curve from Case 4a is $(1,1)$ curve which is a Fibonacci curve, so $c$ must be under Case 4c. By our isotopy relation, $(m,n) \sim (2m-n, n - m)$. So, the curve can be reduced to a minimal form, say $(a,b)$ where $(a,b) \neq (1,1)$ and  $(a,b) \neq (2,1).$ We will now analyze this reduced curve $(a,b)$: 

\begin{itemize}
\item If $a = b$, then $(a,b) = (1,1)$; a contradiction.

\item If $a > b$, then $(a,b)$ is under Case 1; none of those are unknotted.

\item If $b - a < a < b$, then $(a,b)$ is still under Case 4c, and not in reduced form; a contradiction.

\item If $a < b - a < b$, then $(a,b)$ is under Case 4b; none of those are unknotted.

\item If $b-a = a<b$, then $(a,b) = (2,1)$; a contradiction.

\end{itemize}

So, it has to be that either $(a,b) \sim (1,1)$ or $(a,b) \sim (2,1)$. Hence, it must be that $c = (F_{i}, F_{i+1})$ for some $i$. The argument for the case where $c$ is an $\infty$ curve under Case 2 is identical.

\end{proof}
\end{proof}

\subsection{Twist knot with $t>1$--Part 1}\label{ptp} In this section we consider twist knot $K=K_t$, $t\geq 2$, and give the proof of Theorem~\ref{Twist2}.

\begin{proposition} \label{charpostwist}
All essential, simple closed curves on $\Sigma_K$ can be characterized as the closure of one of the braids in Figure~\ref{TTC}.
\end{proposition}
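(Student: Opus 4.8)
\textbf{Proof proposal for Proposition~\ref{charpostwist}.}

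The plan is to follow exactly the strategy already employed in the proof of Proposition~\ref{charnegtwist} for $t\leq -1$, adapting the isotopies to the case $t\geq 2$ where the $t$ full twists in the left band are now right-handed (positive). As in the earlier proof, I would reduce to the case $m,n\geq 1$ with $\gcd(m,n)=1$, since the curves $(1,0)$, $(0,1)$, and (after the coprimality constraint) $(1,1)$ are manifestly trivial. The proof then splits into the same four geometric cases according to whether the curve is an $\infty$ curve or a loop curve and whether $m>n$ or $n>m$; these are precisely the four configurations catalogued in Figure~\ref{Setup}. In each case the goal is to push the ``split'' between the $n$ strands and the $m-n$ (or $n-m$) strands across the surface and through the band crossings, using the elementary isotopies of Figure~\ref{SC}, until the curve is displayed as the closure of one of the braids drawn in Figure~\ref{TTC}.

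Concretely, I would handle Case~1 ($(m,n)$ $\infty$ curve, $m>n>0$) in detail and leave the remaining three cases to figures analogous to Figures~\ref{NTCase1}--\ref{NTCase4}. Starting from the $(m,n)$ curve on $\Sigma_K$, I drop the surface and adopt the strand-count convention of Figure~\ref{SC}(e); then I slide the split along an arc so the $m-n$ strands are routed around the left band, apply the crossing-consolidation moves of Figure~\ref{SC}(a)--(c) to collect the $t$ positive full twists on $n$ strands together with the remaining induced crossings, and read off the resulting picture as the closure of the positive braid of Figure~\ref{TTC}(a). The key structural difference from the $t<0$ analysis is a sign: because the band twists are now positive, the braids produced are positive braids (for the $m<n$ regime and some others), which is what makes Cromwell's minimal-genus theorem applicable later in the proof of Theorem~\ref{Twist2}. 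I would note explicitly which of the four cases yields a positive braid and which yields something that needs further subdivision — this is foreshadowed in the statement of Theorem~\ref{Twist2}, where the $m>n$ cases fracture into subcases $m-tn>0$, $m-n<n$, etc.

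The main obstacle is that, unlike the uniformly negative-twist situation of Section~\ref{ntp}, the positive $t$ twists can \emph{cancel} against oppositely-signed crossings created when strands wrap the right band, so the naive isotopy does not always terminate at an honest positive (or negative) braid closure. Thus the careful bookkeeping is in tracking, case by case, the competition between the $tn$-ish positive crossings from the left band and the crossings of the opposite sign coming from the strand interchange over the right band; this is exactly the quantity $m-tn$ (and the auxiliary $m-2n$ comparisons) that governs the subcase structure. For this proposition I only need to produce the braid pictures of Figure~\ref{TTC} in the ranges where the reduction does go through cleanly — the genuinely obstructed ranges ($t>2$, $m>n$, $m-tn<0$, or loop curves with $m-n<n$) are precisely what Theorem~\ref{Twist2} leaves uncovered and what Theorem~\ref{Twist3} then probes by hand — so the proof here is the direct isotopy argument, with the delicate sign-cancellation analysis deferred to the later sections.
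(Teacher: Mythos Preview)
Your overall strategy---four cases indexed by $\infty$/loop and $m>n$/$n>m$, then isotope using the moves of Figure~\ref{SC} until the curve is a braid closure matching Figure~\ref{TTC}---is exactly what the paper does. But you have the case-to-braid correspondence backwards, and this matters. In the paper it is the two $n>m$ cases (the paper treats these first) that reduce directly to \emph{positive} braid closures, namely Figure~\ref{TTC}(a) and (b). Your chosen ``Case~1'' (the $(m,n)$ $\infty$ curve with $m>n$) is the paper's Case~3, and the isotopy there terminates at the braid of Figure~\ref{TTC}(c), which the paper explicitly notes is \emph{not obviously a positive or negative braid}. The same holds for the $m>n$ loop curve, which lands on Figure~\ref{TTC}(d). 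So your sketch of Case~1---sliding the split, collecting the $t$ positive twists, and ``reading off a positive braid''---would not in fact produce a positive braid; the positive twists on $n$ strands sit alongside negatively-signed crossings coming from the $m-n$ block, and no honest positive word results without further hypotheses.

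This feeds a second misconception in your last paragraph. Proposition~\ref{charpostwist} is not restricted to the ranges where a positive or negative braid appears; Figure~\ref{TTC} has four parts, and parts (c) and (d) are general (mixed-sign) braids that cover \emph{all} $m>n$ curves with no extra hypothesis. The proposition therefore characterizes every essential simple closed curve, full stop. The subcase analysis you correctly anticipate---$m-tn>0$ giving a negative braid, $m-n<n$ for the $\infty$ curve giving a positive braid, and the residual loop case only working at $t=2$---is carried out \emph{inside} the paper's proof of this proposition (as Cases~3a, 3b, 4a, 4b), but it is a refinement layered on top of the basic characterization, feeding Theorem~\ref{Twist2} rather than being needed for the proposition itself. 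So: right architecture, but swap which pair of cases is ``easy,'' and recognize that the proposition already covers the obstructed ranges via mixed braids.
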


\begin{figure}[h!]
\begin{center}
 \includegraphics[width=11cm]{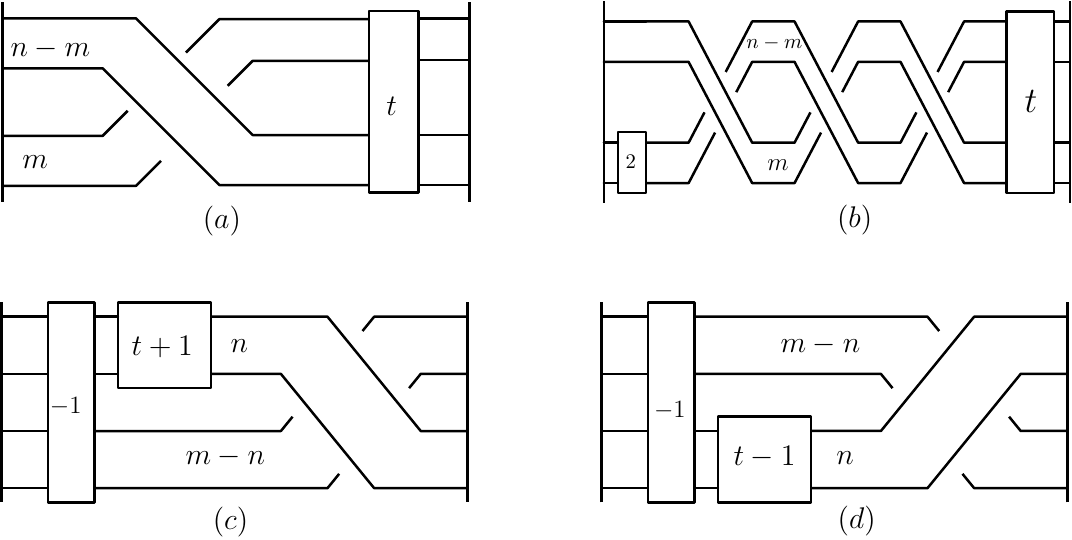}
 \caption{}
  \label{TTC}
\end{center}
\end{figure}

\begin{proof} It suffices to show all possible curves for an arbitrary $m$ and $n$ such that $gcd(m, n) = 1$ are
the closures of braids in Figure~\ref{TTC}. Here too there are four cases to consider but we will analyze these in slightly different order than in the previous two sections.

{\it Case 1:  $(m,n)\ \infty$ curve with $n>m>0$.} In this case the curve is the closure of a positive braid, and this is explained in Figure~\ref{TTCase1} below. More precisely, we start with the curve which is drawn in the top left of the figure, and after a sequence of isotopies this becomes the curve in the bottom right of the figure which is obviously the closure of the braid in the bottom left of the figure.  In particular, when $n>m\geq 1$, none of these curves will be unknotted. 

\begin{figure}[h!]
\begin{center}
 \includegraphics[width=11cm]{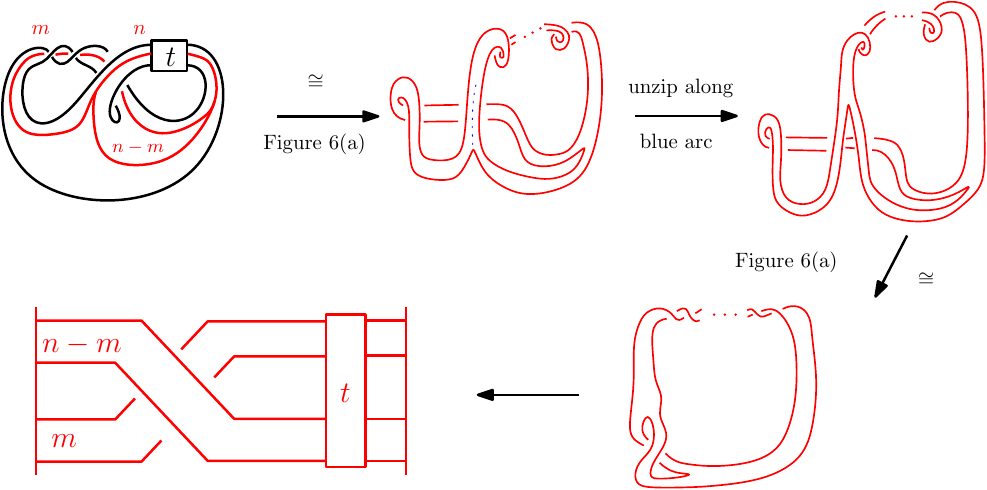}
 \caption{}
  \label{TTCase1}
\end{center}
\end{figure}

{\it Case 2:  $(m,n)$ loop curve with $n>m>0$.} In this case too the the curve is the closure of a positive braid, and this is explained in Figure~\ref{TTCase2} below. In particular, when $n>m>1$, none of these curves will be unknotted. 

\begin{figure}[h!]
\begin{center}
 \includegraphics[width=13cm]{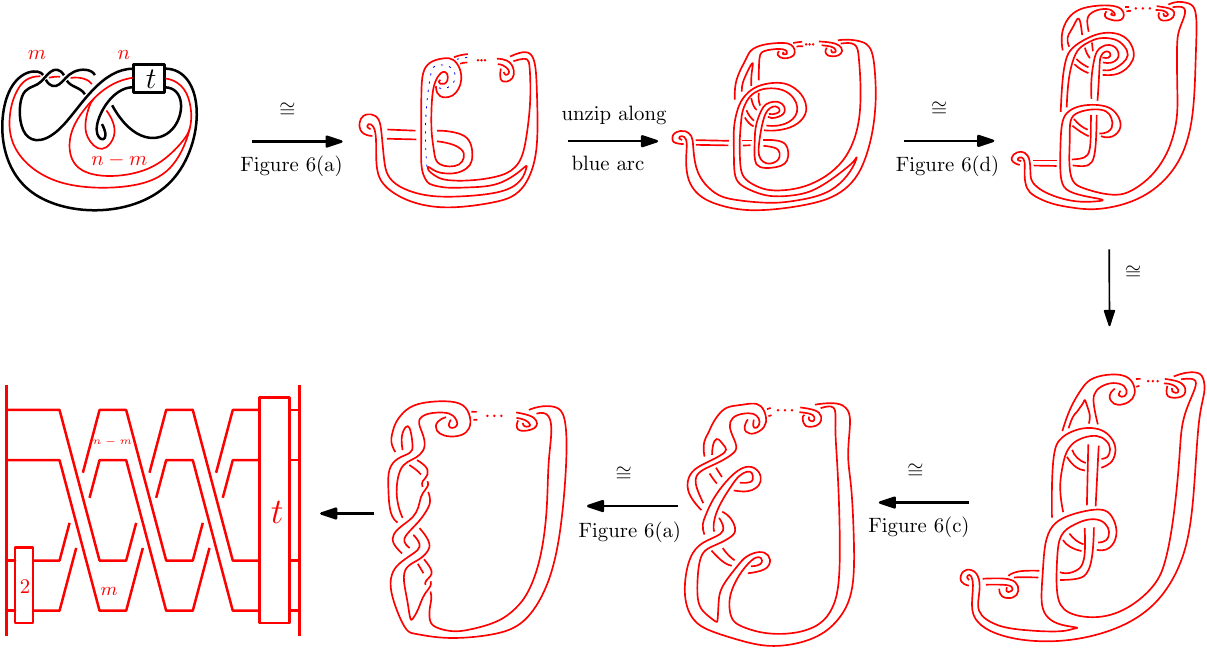}
 \caption{}
  \label{TTCase2}
\end{center}
\end{figure} 

In the remaining two cases we will follow slightly different way of identifying our curves as braid closures. As we will see (which is evident in part (c) and (d) of Proposition~\ref{charpostwist}) that the braids will not be positive or negative braids for general and $m, n$ and $t$ values. We will then verify how under the various hypothesis listed in Theorem~\ref{Twist2} these braids can be reduced to a positive or negative braids.

{\it Case 3:  $(m,n)\ \infty$ curve with $m>n>0$.} We explain in Figure~\ref{TTCase3N} below how the $(m,n)\ \infty$ curve with $m>n>0$ is the closure of the braid in the bottom left of the figure. This braid is not obviously a positive or negative braid.

\begin{figure}[h!]
\begin{center}
 \includegraphics[width=12cm]{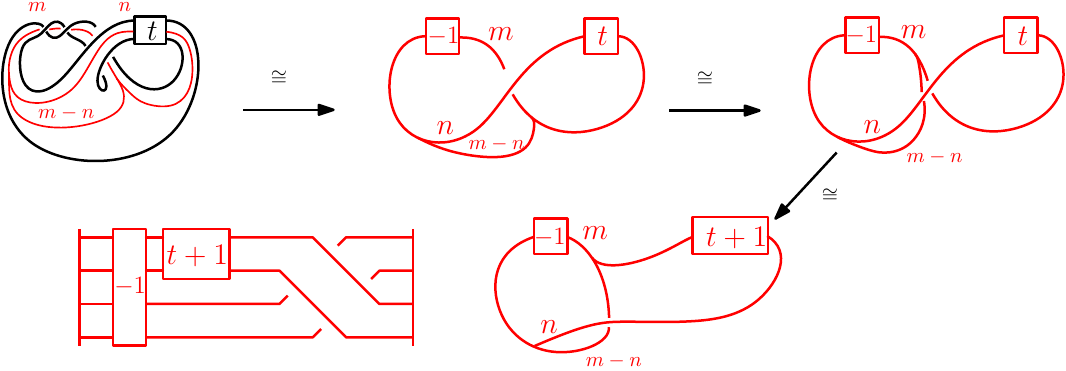}
 \caption{}
  \label{TTCase3N}
\end{center}
\end{figure} 

{\underline{\it Case 3a $(m,n)\ \infty$ curve with $m>n>0$ and $m-tn>0$}}. We want to show the braid in the bottom left of Figure~\ref{TTCase3N} under the hypothesis that $m-tn>0$ can be made a negative braid. We achieve this in Figure~\ref{TTCase3NN}. More precisely, in part (a) of the figure we see the braid that we are working on. We apply the move in Figure~7(f) and some obvious simplifications to reach the braid in part (d). In part (e) of the figure we re-organize the braid: more precisely, since $m-tn>0$ and $m-n=m-tn+(t-1)n$, we can split the piece of the braid in part (d) made of $m-n$ strands as the stack of $m-tn$ strands and set of $t-1$ $n$ strands. We then apply the move in Figure~\ref{SC}(f) repeatedly ($t-1$ times) to obtain the braid in part (f). We note that the block labeled as ``all negative crossings'' is not important for our purpose to draw explicitly but we emphasize that each time we apply the move in Figure~\ref{SC}(f) it produces a full left handed twist between an $n$ strands and the rest. Next, sliding $-1$ full twists one by one from $n$ strands over the block of these negative crossings we reach part (g). After further obvious simplifications and organizations in parts (h)--(j) we reach the braid in part (k) which is a negative braid.   

\begin{figure}[h!]
\begin{center}
 \includegraphics[width=14cm]{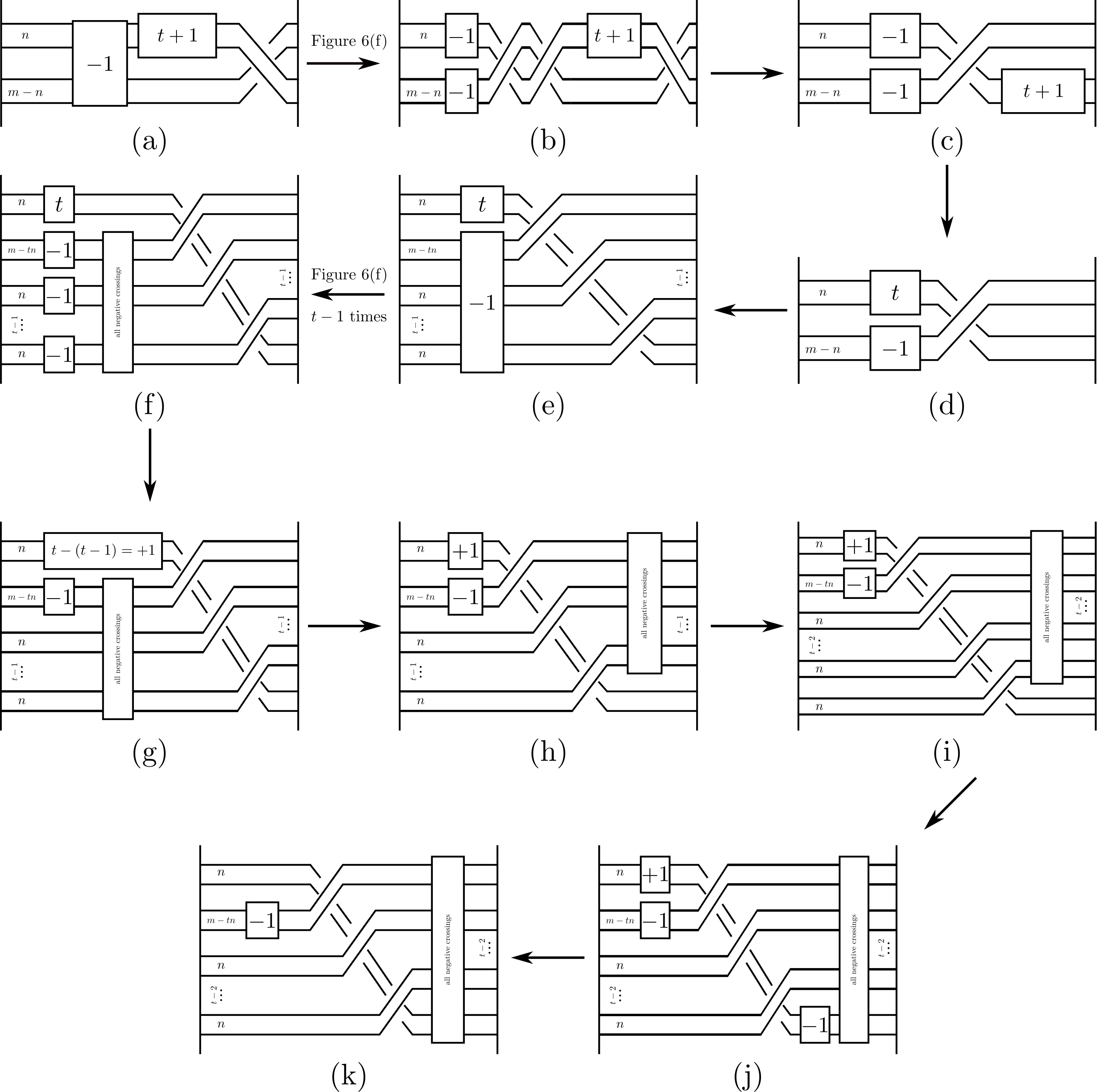}
 \caption{}
  \label{TTCase3NN}
\end{center}
\end{figure} 

{\underline{\it Case 3b $(m,n)\ \infty$ curve with $m>n>0$ and $m-n<n$}}. We want to show in this case the braid in the bottom left of Figure~\ref{TTCase3N} under the hypothesis that $m-n<n$ can be made a positive braid (regardless of $t$ value). This is achieved in Figures ~\ref{Case3sporadic}.  

\begin{figure}[h!]
\begin{center}
 \includegraphics[width=14cm]{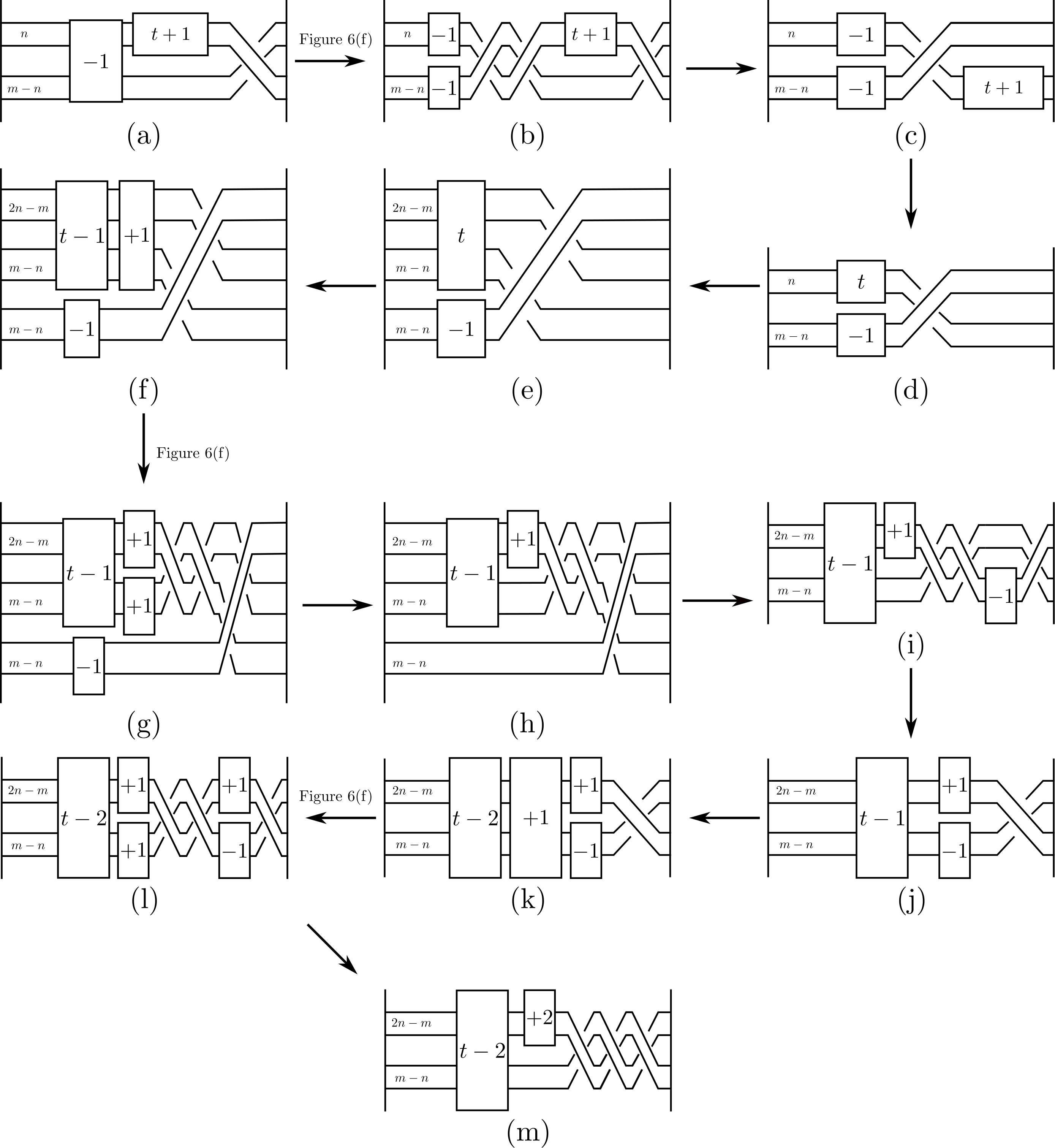}
 \caption{}
  \label{Case3sporadic}
\end{center}
\end{figure}

{\it Case 4:  $(m,n)$ loop curve with $m>n>0$.} The arguments for this case are identical Case 3 and  3a above. The $(m,n)$ loop curve with $m>n>0$ is the closure of the braid that is drawn in the bottom left of Figure~\ref{TTCase4N}. 

\begin{figure}[h!]
\begin{center}
 \includegraphics[width=12cm]{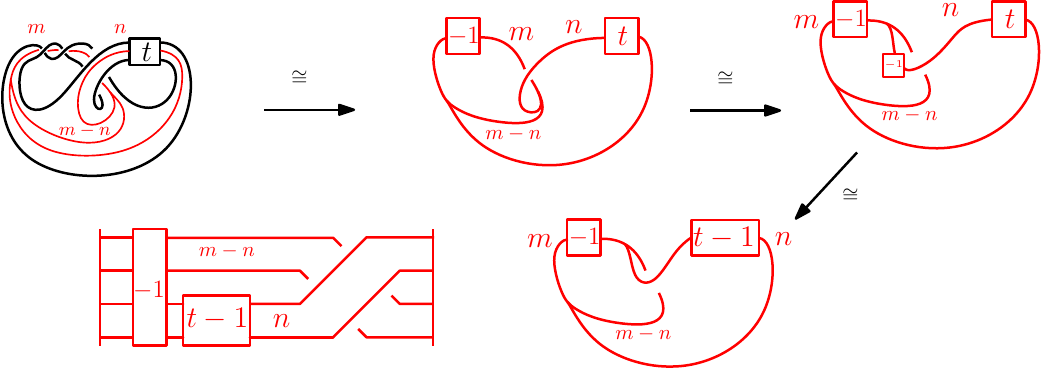}
 \caption{}
  \label{TTCase4N}
\end{center}
\end{figure}

{\underline{\it Case 4a $(m,n)$ loop curve with $m>n>0$ and $m-tn>0$}}. We show the braid, which the $(m,n)\ \infty$ curve with $m>n>0$ is closure of, can be made a negative braid under the hypothesis $m-tn>0$. This follows very similar steps as in Case 3a which is explained through a series drawings in Figure~\ref{TTCase4NN}.  

\begin{figure}[h!]
\begin{center}
 \includegraphics[width=14cm]{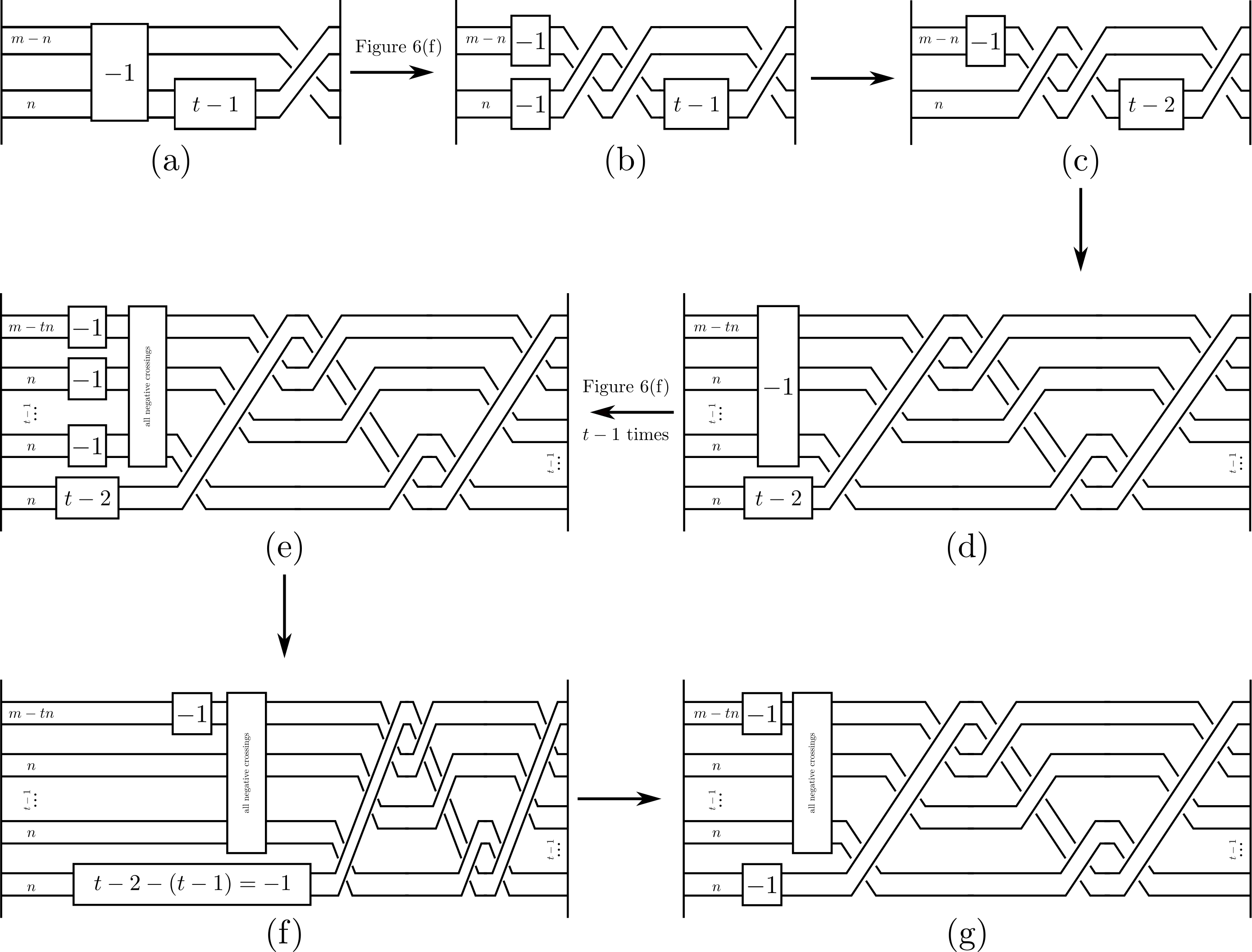}
 \caption{}
  \label{TTCase4NN}
\end{center}
\end{figure}

{\underline{\it Case 4b $(m,n)$ loop curve with $m>n>0$ and $m-n<n$}}. Finally, we consider the $(m,n)$ loop curve with $m>n>0$ and $m-n<n$. Interestingly, this curve for $t>2$ does not have to the closure of a positive or negative braid. This will be further explored in the next section but for now we observe, through Figure~\ref{TTCase4NN}(a)-(c) that when $t=2$ the curve is the closure of a negative braid: The braid in (a) in the figure is the braid from Figure~\ref{TTC}(d). After applying the move in Figure~\ref{SC}{f}, and simple isotopies we obtain the braid in (c) which is clearly a negative braid when $t=2$.

\end{proof}

\begin{proof}[Proof of Theorem~\ref{Twist2}]
The proof of part (1) follows from Case 1 and 2 above. Part (2)a/b follows from Case 3a/b and Case 4a above. As for part (3), observe that when $n>m$ by using Case 1 and 2 we obtain that all homologically essential curves are the closures of positive braids. When $m>n$, we have either $m-2n>0$ or $m-2n<0$. In the former case we use Case 3a and 4a to obtain that all homologically essential curves are the closures of negative braids. In the latter case, first note that $m-2n<0$ is equivalent to $m-n<n$, Now by Case 3b all homologically essential $\infty$ curves are the closures of positive braids, and by Case 4b all homologically essential loop curves are the closures of negative braids. Now by using Cromwell's result and some straightforward genus calculations we deduce that when $m>n>1$ or $n>m\geq 1$ there are no unknotted curves among (positive/negative) braid closures obtained in Case $1-4$ above. Therefore, there are exactly 5 unknotted curves among homologically essential curves on $\Sigma_K$ for $K=K_t$ in Theorem~\ref{Twist2}. 
\end{proof}

\subsection{Twist knot with $t>1$--Part 2}\label{ptp2} In this section we consider twist knot $K=K_t$, $t\geq 3$, and give the proof of Theorem~\ref{Twist3}.

\begin{proof}[Proof of Theorem~\ref{Twist3}]
We show that the loop curve $(3,2)$ when $t\geq 3$ is the pretzel knot $P(2t-5, -3,2)$. This is explained in Figure~\ref{PosTwistSpec}. The braid in $(a)$ is from Figure~\ref{TTC}(d) with $m=3, n=2$, where we moved $(t-2)$ full right handed twists to the top right end. We take the closure of the braid and cancel the left handed half twist on the top left with one of the right handed half twists on the top right to reach the knot in $(c)$. In $(c)-(g)$ we implement simple isotopies, and finally reach, in $(h)$, the pretzel knot $P(2t-5, -3,2)$. This knot has genus $t-1$ (\cite{KimLee}[Corollary~2.7] , and so is never unknotted as long as $t>1$. This pretzel knot is slice exactly when $2t-5+(-3)=0$. That is when $t=4$. The pretzel knot $P(3,-3,3)$ is also known as $8_{20}$. An interesting observation is that although $P(2t-5, -3, 2)$ for $t>2$ is not a positive braid closure, it is a quasi-positive braid closure.

\begin{figure}[h!]
\begin{center}
 \includegraphics[width=13cm]{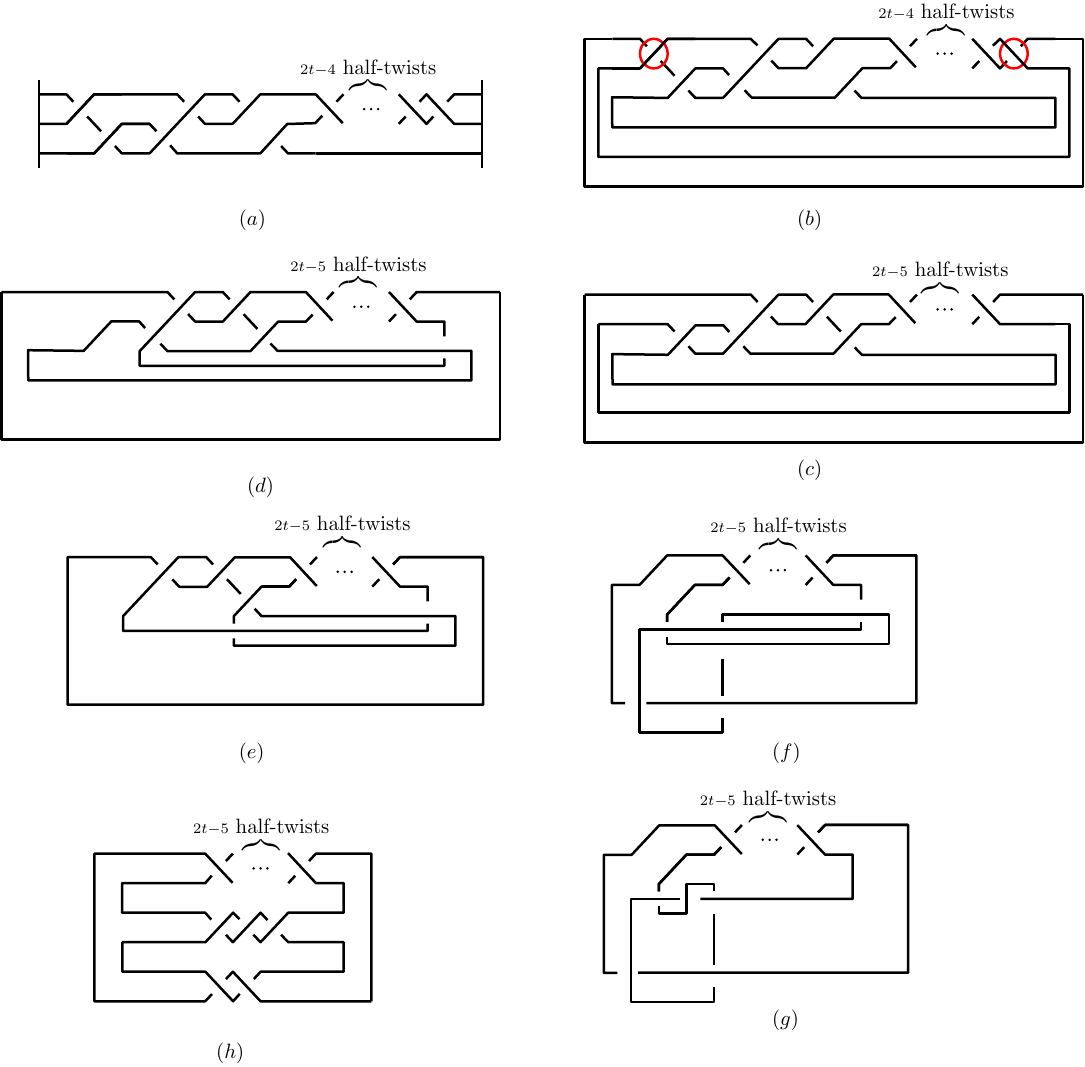}
 \caption{}
  \label{PosTwistSpec}
\end{center}
\end{figure} 

\end{proof}

\begin{proposition}\label{Rtwist}
The $(m,n)$ loop curve with $m-n=1$, $n>3$ and $t> 4$ is never slice.
\end{proposition}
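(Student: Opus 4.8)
The plan is to realize the $(m,n)$ loop curve with $m-n=1$ as an explicit knot in $S^3$ and then apply a sliceness obstruction. First I would run the same isotopy machinery used in the proof of Theorem~\ref{Twist3}: starting from the braid in Figure~\ref{TTC}(d) with $n>3$, $m=n+1$, and $t>4$, I would cancel the left‑handed half twist against one right‑handed half twist at the top and simplify, aiming to identify the resulting knot as a concrete family — most plausibly a pretzel knot of the form $P(2t-2n-1,-(2n-1),2)$ or a closely related two‑bridge/pretzel family generalizing the $P(2t-5,-3,2)$ computation (which is the $n=2$ case). The self-linking number formula $s=-m^2-mn+n^2t$ with $m=n+1$ gives $s = -(n+1)(2n+1)+n^2t = n^2(t-2) - 3n - 1$, so for $t>4$ and $n>3$ this grows, and in particular the knot is genuinely knotted; the point is to rule out sliceness.

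The key step is the sliceness obstruction, and following the remark in the excerpt I would use Rudolph's work \cite{rudolph} on the slice–Bennequin inequality for quasipositive (or strongly quasipositive) surfaces. The observation at the end of the proof of Theorem~\ref{Twist3} that $P(2t-5,-3,2)$ is a quasipositive braid closure suggests these loop curves are (strongly) quasipositive for $t$ large, and for a nontrivial strongly quasipositive knot the Seifert genus equals the slice genus and is positive, so it cannot be slice. Concretely, I would exhibit the curve as the closure of a quasipositive braid (tracking the braid word through the isotopies, as in Figure~\ref{PosTwistSpec} but with general $n$), compute the writhe and strand number, and invoke Rudolph's inequality $g_4 \geq \tfrac{1}{2}(w - s + 1)$ to conclude $g_4>0$. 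Alternatively, and perhaps more cleanly, I would compute the signature or the Alexander polynomial of the identified knot family and show the Fox–Milnor condition fails (for a slice knot $\Delta_K(t) \doteq f(t)f(t^{-1})$); for pretzel knots $P(p,q,r)$ with $p,q,r$ odd the Alexander polynomial and signature are classically computable, and the parity/growth in $t$ and $n$ should obstruct the factorization except in sporadic small cases already excluded by the hypotheses $n>3$, $t>4$.

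I would organize the write‑up as: (1) isotope the $(m-n=1)$ loop curve to an explicit knot $K_{n,t}$ via a figure paralleling Figure~\ref{PosTwistSpec}; (2) record its braid word / pretzel description and verify (strong) quasipositivity, or else compute $\Delta_{K_{n,t}}$ and $\sigma(K_{n,t})$; (3) apply Rudolph's slice–Bennequin inequality (or Fox–Milnor / signature) to get $g_4(K_{n,t})>0$, hence not slice. The main obstacle I anticipate is step (1)–(2): carrying the isotopy through for \emph{general} $n$ rather than the fixed small values handled elsewhere, and getting a clean enough normal form (pretzel or quasipositive braid) that the obstruction computation is uniform in $n$ and $t$. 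If a single uniform normal form proves elusive, the fallback is to compute the signature directly from the Seifert matrix of the loop curve's explicit Seifert surface coming from Seifert's algorithm on the braid closure, and show $|\sigma| $ is too large (or of the wrong parity) for a slice knot, which only requires bounding an explicit family of symmetric integer matrices rather than a global isotopy.
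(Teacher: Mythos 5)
Your key tool --- Rudolph's slice--Bennequin inequality --- is the same one the paper uses, but the route you propose makes the argument depend on steps that are both unestablished and unnecessary. The paper never identifies the $(n+1,n)$ loop curve as an explicit knot family: it simply takes the braid presentation of the loop curve already produced in the Case 4 analysis (Figure~\ref{TTCase4NN}(c)), counts crossings there, namely $k_+=(t-2)n(n-1)$ positive and $k_-=(m-n)(m-n-1)+3(m-n)n=3n$ negative crossings when $m-n=1$, and applies $g_4(\hat{\beta})\geq \tfrac{1}{2}\left(k_+-k_--s+1\right)$, where $s$ is the number of strands of that braid. Since the slice--Bennequin inequality holds for an \emph{arbitrary} braid presentation (quasipositivity is only relevant if one wants equality, i.e.\ $g_4=g_3$), no quasipositivity verification and no normal form are needed; the resulting lower bound is visibly positive for $m-n=1$, $n>3$, $t>4$, which is the whole proof.

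As written, your primary route has a genuine gap. The conjectured pretzel form $P(2t-2n-1,-(2n-1),2)$ for general $n$ is a guess: the isotopy of Figure~\ref{PosTwistSpec} is carried out only for $m=3$, $n=2$, and there is no argument that the $(n+1,n)$ loop curves stay within a pretzel (or any other closed) family --- indeed the paper's remark warns that such identifications become complicated quickly --- so the Fox--Milnor/signature fallback has nothing concrete to compute with. Your alternative sentence (track the braid word, compute writhe and strand number, invoke Rudolph) is in substance the paper's proof, but you condition it on first ``verifying (strong) quasipositivity,'' which is neither proved in your sketch nor obvious (the braid in question genuinely has negative crossings; only the twist-region is positive), and, more importantly, is not needed for the lower bound. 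Dropping the normal-form and quasipositivity steps and doing the crossing count directly on the braid of Figure~\ref{TTCase4NN}(c) closes the gap and yields the stated conclusion.
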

\begin{proof} 
By Rudoplh in \cite{rudolph}, we have that for a braid closure $\hat{\beta}$ when $k_{+} \neq k_{-}$
\[g_4(\hat{\beta}) \geq \frac{|k_{+} - k_{-}| - n + 1}{2}\]
 where $\beta$ is a braid in $n$ strands, and $k_{\pm}$ is the number of positive and negative crossings in $\beta$. For quasi-positive knots, equality holds. In which case, the Seifert genus is also the same as the four ball (slice) genus.

Note that this formula can also be thought as a generalization to the Seifert genus calculation formula we used for positive/negative braid closures, since for those braids when, $|k_{+} - k_{-}|$ is the number of crossings and $n$, the braid number, is exactly the number of Seifert circles. Thus Rudoplh's inequality can also be used to state that the above calculations to rule out unknotted curves on various genus one Seifert surface can also be used to state that there are no slice knots other than the unknotted ones found.

Now for the loop curve $c=(m,n)$ as in Figure~\ref{TTCase4NN}(c), we have that 
\[k_{+} = (t-2)n(n-1), k_{-} = (m-n)(m-n-1) + 3(m-n)n \]

Hence, when $m-n = 1$, we get that $k_{-} = 3n$. Notice also that for $n \geq 3, t \geq 4$, we have $k_{+} > k_{-}$. Thus, for $n > 3, t > 4, m-n=1$ we obtain $c=\hat{\beta}$ is never slice as;

\[g_4(\hat{\beta}=c) \geq \frac{(t-2)n(n-1) - 3n - m +1}{2} = n((t-2)(n-1) - 4) > 0\]

It can be manually checked that the $(4,3)$ loop curve when $t = 3$ is not slice either.
\end{proof}

\section{Whitehead Doubles}\label{white}
In this section we provide the proof of Theorem~\ref{Whitehead}

\begin{proof}[Proof of Theorem~\ref{Whitehead}]
Let $f:S^1\times D^2\rightarrow S^3$ denote a smooth embedding such that $f(S^1\times \{0\})=K$. Set $T=f(S^1\times D^2)$. Up to isotopy, the collection of essential, simple closed, oriented curves in $\partial T$ is parameterized by \[\{m\mu+n\lambda~|~m, n\in \mathbb{Z}~ \textrm{and}~ \textrm{gcd}(m,n)=1\}\] where $\mu$ denotes a meridian in $\partial T$ and $\lambda$ denotes a standard longitude in $\partial T$ coming from a Seifert surface. With this parameterization, the only curves that are null-homologous in $T$ are $\pm\mu$ and the only curves that are null-homologous in $S^3\setminus{\textrm{int}(T)}$ are $\pm\lambda$. Of course $\pm\mu$ will bound embedded disks in $T$, but $\pm\lambda$ will not bound embedded disks in $S^3\setminus{\textrm{int}(T)}$ as $K$ is a non-trivial knot. In other words, the only compressing curves for $\partial T$ in $S^3$ are meridians. 

Suppose now that $C$ is a smooth, simple closed curve in the interior of $T$, and there is a smoothly embedded $2$-disk, say $\Delta$, in $S^3$ such that $\partial \Delta=C$. Since $C$ lies in the interior of $T$, we may assume that $\Delta$ meets $\partial T$ transversely in a finite number of circles. Initially observe that if $\Delta\cap \partial T=\emptyset$, then we can use $\Delta$ to isotope $C$ in the interior of $T$ so that the result of this isotopy is a curve in the interior of $T$ that misses a meridinal disk for $T$.  Now suppose that $\Delta\cap \partial T\neq\emptyset$. We show, in this case too,  $C$ can be isotoped to a curve that misses a meridinal disk for $T$. To this end, let $\sigma$ denote a simple closed curve in $\Delta\cap \partial T$ such that $\sigma$ is innermost in $\Delta$. That is $\sigma$ bounds a sub-disk, $\Delta'$ say, in $\Delta$ and the interior of $\Delta'$ misses $\partial T$. There are two cases, depending on whether or not that $\sigma$ is essential in $\partial T$. If $\sigma$ is essential in $\partial T$, then, as has already been noted, $\sigma$ must be a meridian. As such, $\Delta'$ will be a meridinal disk in $T$ and $C$ misses $\Delta'$. If $\sigma$ is not essential in $\partial T$, then $\sigma$ bounds an embedded $2$-disk, say $D$, in $\partial T$. It is possible that $\Delta$ meets the interior of $D$, but we can still cut and paste $\Delta$ along a sub-disk of $D$ to reduce the number of components in $\Delta\cap \partial T$. Repeating this process yields that if $C$ is smoothly embedded curve in the interior of $T$ and $C$ is unknotted in $S^3$, then $C$ can be isotoped in the interior of $T$ so as to miss a meridinal disk for $T$.

With all this in place, we return to discuss Whitehead double of $K$. Suppose that $F$ is a standard, genus 1 Seifert surface for a double of $K$. See Figure~\ref{doubleSeifert}. The surface $F$ can be viewed as an annulus $A$ with a a $1$-handle attached to it. Here $K$ is a core circle for $A$, and the $1$-handle is attached to $A$ as depicted in Figure~\ref{handle}          

\begin{figure}[h!]
\begin{center}
 \includegraphics[width=8cm]{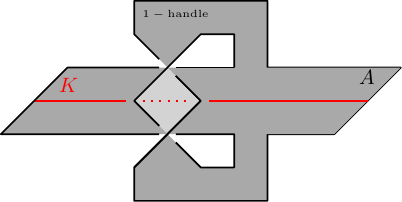}
 \caption{Standard genus 1 Seifert surface $F$ for a double of $K$.}
  \label{handle}
\end{center}
\end{figure}

Observe that $F$ can be constructed so that it lives in the interior of $T$. Now, the curve $C$ that passes once over the $1$-handle  and zero times around $A$ obviously misses a meridinal disk for $T$, and it obviously is unknotted in $S^3$. On the other hand, if $C$ is any other essential simple closed curve in the interior of $F$, then $C$ must go around $A$ some positive number of times. It is not difficult, upon orienting, $C$ can be isotoped so that the strands of $C$ going around $A$ are coherently oriented. As such, $C$ is homologous to some non-zero multiple of $K$ in $T$. This, in turn, implies that $C$ cannot be isotoped in $T$ so as to miss some meridinal disk for $T$. It follows that $C$ cannot be an unknot in $S^3$. 

\end{proof}

%{\bf Final remarks.} We note that the choice of genus one Seifert surface $\Sigma_K$ we made for $K=K_t$ is really not a restriction (see \cite{ET}[Proof of Theorem~$1$]). Moreover, the answer for finding unknotted curves on a fixed Seifert surface of genus one knot $K\subset S^3$ qualitatively doesn't change for the mirror knot $\overline K$. And this way we obtain similar answers for twist knot with odd number of half twists in the twist region by reversing clasp in twist knot in Figure~\ref{TwistKnots} to be right handed and then mirroring the knot. 

\providecommand{\bysame}{\leavevmode\hbox to3em{\hrulefill}\thinspace}
\providecommand{\MR}{\relax\ifhmode\unskip\space\fi MR }
% \MRhref is called by the amsart/book/proc definition of \MR.
\providecommand{\MRhref}[2]{%
  \href{http://www.ams.org/mathscinet-getitem?mr=#1}{#2}
}
\providecommand{\href}[2]{#2}

\end{document}